\documentclass[a4paper,11pt]{article}
\usepackage{latexsym,amssymb,amsmath,amsthm,amsxtra,xypic}
\usepackage{stmaryrd}
\usepackage{amsfonts}
\usepackage{amscd}
\usepackage{mathrsfs}
\usepackage[dvips]{graphicx}
\usepackage[all]{xy}

\setlength{\topmargin}{-0.2in}
\setlength{\textheight}{9in}   
\setlength{\textwidth}{6.2in}    
\setlength{\oddsidemargin}{0in}
\setlength{\evensidemargin}{0in}
\setlength{\headheight}{26pt}
\setlength{\headsep}{5pt}

\newcommand{\allowpagebreak}
\allowdisplaybreaks

\newtheorem{thm}{Theorem}[section]

\newtheorem{pro}[thm]{Proposition}

\theoremstyle{definition}
\newtheorem{defi}[thm]{Definition}
\newtheorem{rmk}[thm]{Remark}

\setlength{\baselineskip}{1.8\baselineskip}

\newcommand {\emptycomment}[1]{}

\newcommand{\End}{\mathrm{End}}

\allowdisplaybreaks

\begin{document}
\title{Cohomology and automorphisms of Com-PreLie algebras}
\author{Tao Zhang, Ying-Hua Lu}

\date{}
\maketitle
 \maketitle

 \setcounter{section}{0}

  \begin{abstract}
  This paper introduces the concept of representations for Com-PreLie algebras and develops corresponding cohomology theories, examining how cohomology groups can be applied in the context of Com-PreLie algebras. Initially, we utilize the cohomology theory to investigate abelian extensions of Com-PreLie algebras. Next, given an abelian extension of Com-PreLie algebras and its representation, we explore the inducibility of Com-PreLie automorphisms, deriving both necessary and sufficient conditions for the inducibility problem. Lastly, we delve deeper into the inducibility of Com-PreLie automorphisms using the Wells exact sequences, offering a clear framework for studying the inducibility of Com-PreLie automorphisms.
\par\smallskip

\par\smallskip
{\bf Keywords:}
Com-PreLie algebras, Representations, Cohomology, Automorphisms, Wells exact sequences.
\end{abstract}


\section{Introduction}

A Com-PreLie algebra is an important algebraic structure that appears in the mathematical formulation of rooted trees in the context of Quantum Field Theory and Fliess operators in Control Theory \cite{{Foissy1},{Foissy2}}.
Recall that Com-PreLie algebras are commutative algebras with an extra pre-Lie product, compatible with the product and pre-Lie product, see Definition \ref{Com-PreLie-alg} below.
Although some research has been conducted on Com-PreLie algebras, their representation theory and cohomology theory remain unexplored. This paper aims to address this gap. So the objective of this paper is to investigate the representations and cohomology of Com-PreLie algebras, as well as to highlight the applications of Com-PreLie cohomology in the realms of abelian extensions and inducibility problems (both of which will be touched upon briefly below).

\subsection{Abelian extensions}
The problem of extensions of algebraic structures, including central extensions, abelian extensions and non-abelian extensions, are useful to study the underlying structure. Among these types of extensions, the non-abelian extensions studied by Eilenberg and Maclane are the most general \cite{eilenberg}.
Specifically, a non-abelian extension of a Lie algebra \(\mathfrak{g}\) by another Lie algebra \(\mathfrak{h}\) can be described by a short exact sequence \(0 \rightarrow \mathfrak{h} \rightarrow \mathfrak{e} \rightarrow \mathfrak{g} \rightarrow 0\).
As a special case of non-abelian extensions, abelian extensions are even more specific. When a Lie algebra \(\mathfrak{g}\) is extended by a representation \(\mathfrak{h}\), if the Lie bracket on \(\mathfrak{h}\) is trivial and the induced representation on \(\mathfrak{h}\) coincides with the prescribed one, then this type of extension is called an abelian extension. In this case, the structure of \(\mathfrak{h}\) is relatively simple, which makes abelian extensions significant both in theory and application.

It is worth noting that the Chevalley-Eilenberg cohomology theory of Lie algebras provides a powerful tool for the classification of abelian extensions \cite{bardakov,hilton}. Through the study of cohomology groups, we can systematically understand and classify the abelian extensions of Lie algebras, thereby further revealing the intrinsic structure and properties of Lie algebras. Then numerous works have been devoted to abelian extensions of various kinds of
algebras, such as, Lie coalgebras, Lie Yagamuti algebras, pre-Lie
algebras, see \cite{du-tan, goswami,liu-chen}.
But little is known about the abelian
 extensions of Com-PreLie algebras. This is the first motivation for writing this paper.

\medskip
\subsection{The inducibility problems} The question of inducibility for automorphisms was initially explored in a work by Wells \cite{wells}. Following this, the same issue was extended to derivations in subsequent studies \cite{bai-zhang,tan-xu}.
In the realm of Lie algebras, the inducibility problem can be described as follows. Consider an abelian extension of a Lie algebra \(\mathfrak{g}\) by a given representation \(V\), represented by the exact sequence \(0 \rightarrow V \xrightarrow{i} E \xrightarrow{p} \mathfrak{g} \rightarrow 0\). Here, the induced representation on \(V\) from the abelian extension aligns with the specified representation. Let \(\mathrm{Aut}_V (E)\) denote the collection of all Lie algebra automorphisms \(\gamma \in \mathrm{Aut} (E)\) such that \(\gamma (V) \subset V\). There exists a group homomorphism \(\tau : \mathrm{Aut}_V (E) \rightarrow \mathrm{Aut} (V) \times \mathrm{Aut} (\mathfrak{g})\), defined by \(\tau (\gamma) = (\gamma |_V , p \gamma s)\), where \(s\) is a section of the map \(p\). The inducibility problem for automorphisms seeks to identify the necessary and sufficient conditions under which a pair \((\beta, \alpha) \in \mathrm{Aut} (V) \times \mathrm{Aut} (\mathfrak{g})\) of Lie algebra automorphisms falls within the image of \(\tau\).

It is evident that the aforementioned inducibility problems are applicable to other types of algebras as well. For further exploration of these issues across different algebras, one can refer to works such as \cite{ das-sahoo, goswami, mishra-das} and the references cited therein.

In this study, we aim to explore the inducibility problems within the framework of Com-PreLie algebras. To achieve this, we will develop appropriate Wells maps for Com-PreLie automorphisms and demonstrate that the challenges associated with these problems are rooted in the second Com-PreLie cohomology group. As a result, we derive a Wells exact sequence that links different Com-PreLie automorphism groups with the second Com-PreLie cohomology group.

\medskip
\subsection{Organization}
The paper is organized as follows.  In Section \ref{sec-2},  we recall the notion of Com-PreLie algebras. The representations and low dimensional
 cohomology of Com-PreLie algebras are then given. As applications of our cohomology, in Section \ref{sec-3} and \ref{sec-4}, we study abelian extensions and inducibility problems of a Com-PreLie algebra. In Section \ref{sec-5}, we construct the Wells type exact sequences to solve inducibility problem of automorphisms.

All vector spaces and algebras, (multi)linear maps, tensor products and wedge products are over a field of characteristic $0$ unless specified otherwise.


\medskip

\section{Representations and cohomology}\label{sec-2}
In this section, we recall some basic definitions and facts concerning Com-PreLie algebra. Further, we introduce  the concept of representations of a Com-PreLie algebra and define the semi-direct product in the context of Com-PreLie algebra. Finally, we define a low dimensional cohomology of a Com-PreLie algebra.

Recall that a \textbf{commutative associative algebra} is a vector space $A$ together with a bilinear map  $ \ast :A \times  A \rightarrow A$ such that
$$x\ast(y\ast z)=(x\ast y)\ast z,$$
$$x\ast y=y\ast x,$$
for all $x,y,z\in A,$ we denote it by $(A, \ast).$

A left \textbf{pre-Lie algebra} is a vector space $A$ together with a bilinear map  $ \bullet  :A \times  A \rightarrow A$ such that
$$(x \bullet y) \bullet z - x \bullet (y \bullet z) = (y \bullet x) \bullet z - y \bullet (x \bullet z),$$
for all $x,y,z\in A,$ we denote it by $(A, \bullet)$ or simply $A$.
For a pre-Lie algebra $A$ , the operation $[x, y] = x \bullet y - y \bullet x$ naturally induces a Lie algebra structure,
which is the origin of the term "pre-Lie algebra".

\begin{defi}
  A representation of a commutative associative algebra $(A,\ast)$ is a pair $(V, \mu)$, where $V$ is a vector space and
$\mu : A \rightarrow End(V )$ is a linear map satisfying
	$$\mu(x\ast y) = \mu(x)\ast \mu(y), \;\; \forall x,y\in A. $$
\end{defi}
\begin{defi}
  A representation of a pre-Lie algebra  $(A, \bullet)$ is a triple $(V,l,r)$, where $V$ is a vector space and
$l : A \times  V \rightarrow V, r : V \times  A \rightarrow V$  are linear maps satisfying
	$$l(x \bullet y)-l(x)l(y)=l(y \bullet x)-l(y)l(x),$$
$$r(y)l(x)-l(x)r(y)=r(y)r(x)-r(x\bullet y), \;\; \forall x,y\in A. $$
\end{defi}
In fact, the pair $(V,\mu)$ is a representation of a commutative associative algebra $(A, \ast)$ if and only if the direct sum $A \oplus V$ of
vector spaces is a (semi-direct product) commutative associative algebra by defining the multiplication $\ast_{\ltimes}$ on $A \oplus V $ by
\begin{eqnarray}\label{mul-ass}
(x + u) \ast_{\ltimes} (y + v) &=& x \ast y + \mu(x)v + \mu(y)u, \;\; \forall x,y \in A, u, v \in V .
\end{eqnarray}
We denote it by $A \ast_{\ltimes} V$.

The triple $(V,l,r)$ is a representation of a pre-Lie algebra $(A, \bullet)$ if and only if the direct sum $A \oplus V$ of
vector spaces is a (semi-direct product) pre-Lie algebra by defining the multiplication $\bullet_{\ltimes}$ on $A \oplus V $ by
\begin{eqnarray}\label{mul-pre}
(x + u) \bullet_{\ltimes} (y + v) &=& x \bullet y + l(x)v + r(y)u, \;\; \forall x,y \in A, u, v \in V .
\end{eqnarray}
We denote it by $A \bullet_{\ltimes} V$.

Let us recall the cohomology of pre-Lie algebras. Let $(V,l,r)$ be a representation of a pre-Lie algebra $(A, \bullet)$. Define the $n$-cochains group by
$$
C^{n}(A, V)=\operatorname{Hom}\left(\wedge^{n-1} A \otimes A, V\right), \quad(n \geq 1)
$$
and the coboundary operator $\partial: C^{n}(A, V) \longrightarrow C^{p+1}(A, V)$ is the Dzhumadil'daev coboundary operator \cite{D} given by
\begin{align*}
& ( \delta_\mathrm{D} f)\left(x_{1}, \cdots, x_{n+1}\right) \\
= & \sum_{i=1}^{n}(-1)^{i+1} l\left(x_{i}\right) f\left(x_{1}, \cdots, x_{i-1}, x_{i+1}, \cdots, x_{p+1}\right)  \\
& +\sum_{i=1}^{n}(-1)^{i+1} r\left(x_{p+1}\right) f\left(x_{1}, \cdots, x_{i-1}, x_{i+1}, \cdots, x_{n}, x_{i}\right) \\
& -\sum_{i=1}^{n}(-1)^{i+1} f\left(x_{1}, \cdots, x_{i-1}, x_{i+1}, \cdots, x_{n}, x_{i}\bullet x_{n+1}\right) \\
& +\sum_{1 \leq i<j \leq n}(-1)^{i+j} f\left(\left[x_{i}, x_{j}\right], x_{1}, \cdots, x_{i-1}, x_{i+1}, \cdots, x_{j-1}, x_{j+1}, \cdots, x_{p+1}\right)
\end{align*}
for any $x_{1}, \cdots, x_{n+1} \in A$ and $f \in C^{n}(A, V)$, where $\left[x_{i}, x_{j}\right]=x_{i} \bullet x_{j}-x_{j}\bullet x_{i}$.

\begin{defi}[\cite{Foissy1,Foissy2}]\label{Com-PreLie-alg}
A \textbf{ Com-PreLie algebra} is a family $A = (A, \ast, \bullet),$  where
$(A,\ast)$ is a commutative associative algebra and $(A,\bullet)$ is a pre-Lie algebra, $\ast$ and $\bullet$ are bilinear products on $A$, such that
\begin{eqnarray}\label{Com-PreLie}
x \bullet(y \ast z)&=& (x \bullet y) \ast z+y \ast (x \bullet z), \;\; \forall x,y,z \in A.
\end{eqnarray}
\end{defi}

\begin{rmk}
The origin Com-PreLie algebra defined in \cite{Foissy1,Foissy2} is a commutative associative algebra  $(A,\ast)$  and  a right pre-Lie algebra $(A,\bullet)$ satisfying the following condition:
\begin{eqnarray}\label{rCom-PreLie}
(x \ast y) \bullet z  &=& (x \bullet z) \ast y + x \ast (y \bullet z), \;\; \forall x,y,z \in A.
\end{eqnarray}
Thus the Com-PreLie algebra defined in this paper is in fact the left version of Com-PreLie algebra defined in \cite{Foissy1,Foissy2}.
\end{rmk}

\begin{defi}\label{biaoshi-Com-PreLie}
Let $A$ be a Com-PreLie algebra. A representation of $A$ is a quadruple $(V, \mu, l, r )$, such that
the pair $(V, \mu )$ is a representation of $(A,\ast)$, the triple $(V, l, r )$ is a representation of $(A, \bullet)$, and the following conditions
hold:
\begin{eqnarray}
\label{Com-repre-1}l(x)\mu(y)=\mu(x \bullet y)+\mu(y)l(x),\\
\label{Com-repre-2}r(x\ast y)=\mu(y)r(x)+\mu(x)r(y),
\end{eqnarray}
for all $x,y \in A.$
\end{defi}

Let $A$ be a Com-PreLie algebra. Then the quadruple $(A, \mu_\mathrm{ad}, l_\mathrm{ad},r_\mathrm{ad})$ is a representation, where $(\mu_\mathrm{ad})(x) y = x \ast y$, $(l_\mathrm{ad})(x) y = x \bullet y $ and $(r_\mathrm{ad})(x) y = y \bullet x $, for all $x, y \in A$. This is called the {\em adjoint representation} or the regular representation of $A$.
\begin{pro}
 Let $A$ be a Com-PreLie algebra. Let $V$ be a vector space and $\mu,l,r : A \rightarrow End(V )$ be linear maps. Then
$(V,\mu, l, r )$ is a representation of $A$ if and only if the direct sum $A \oplus V$ of vector spaces is a
Com-PreLie algebra  by defining the bilinear operations on $A \oplus V$ by Eqs. ~(\ref{mul-ass}) and (\ref{mul-pre})  respectively. We denote it by $A \ltimes V=(A \oplus V, \ast_{\ltimes}, \bullet_{\ltimes})$, and call it the semi-direct product.
\end{pro}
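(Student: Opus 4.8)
The plan is to reduce everything to the two equivalences recalled immediately before the statement: $(V,\mu)$ is a representation of the commutative associative algebra $(A,\ast)$ if and only if $(A\oplus V,\ast_{\ltimes})$ is commutative associative, and $(V,l,r)$ is a representation of the pre-Lie algebra $(A,\bullet)$ if and only if $(A\oplus V,\bullet_{\ltimes})$ is pre-Lie. Granting these, a quadruple $(V,\mu,l,r)$ is a representation of the Com-PreLie algebra $A$ exactly when, in addition, the mixed axioms \eqref{Com-repre-1} and \eqref{Com-repre-2} hold; and $(A\oplus V,\ast_{\ltimes},\bullet_{\ltimes})$ is a Com-PreLie algebra exactly when, in addition, the compatibility identity \eqref{Com-PreLie} holds for $\ast_{\ltimes}$ and $\bullet_{\ltimes}$. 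So it suffices to prove: assuming $A$ is Com-PreLie and $(V,\mu)$, $(V,l,r)$ are representations of the respective component structures, the identity \eqref{Com-PreLie} holds on all of $A\oplus V$ if and only if \eqref{Com-repre-1} and \eqref{Com-repre-2} hold on $V$.

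Then I would take arbitrary elements $x+u,\,y+v,\,z+w\in A\oplus V$ with $x,y,z\in A$, $u,v,w\in V$, and expand both sides of \eqref{Com-PreLie} using only the defining formulas \eqref{mul-ass} and \eqref{mul-pre}. Projecting onto the $A$-summand, both sides reduce to the identity \eqref{Com-PreLie} for $A$ itself, which holds by hypothesis, so the $A$-component imposes nothing. Projecting onto the $V$-summand, the left-hand side yields $l(x)\mu(y)w + l(x)\mu(z)v + r(y\ast z)u$, while the right-hand side yields $\mu(x\bullet y)w + \mu(y)l(x)w + \mu(z)l(x)v + \mu(x\bullet z)v + \mu(z)r(y)u + \mu(y)r(z)u$.

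Finally, since this identity must hold for all $u,v,w\in V$, and $u$, $v$, $w$ each enter linearly through a distinct argument slot, the $u$-, $v$- and $w$-dependent parts must agree separately: the $w$-part gives $l(x)\mu(y)=\mu(x\bullet y)+\mu(y)l(x)$, i.e.\ \eqref{Com-repre-1}; the $v$-part gives the same identity with $y$ renamed to $z$; and the $u$-part gives $r(y\ast z)=\mu(z)r(y)+\mu(y)r(z)$, i.e.\ \eqref{Com-repre-2}. Conversely, reassembling these coefficient identities shows \eqref{Com-PreLie} holds throughout $A\oplus V$, closing the equivalence. I expect no genuine obstacle here — the computation is routine bookkeeping — and the only point deserving care is the decoupling in this last step: it is the bilinearity of $\ast_{\ltimes}$ and $\bullet_{\ltimes}$ that forces the $u$-, $v$-, and $w$-contributions to match independently, so no hypothesis on the characteristic or dimension is used.
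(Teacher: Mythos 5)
Your proposal is correct and takes essentially the same route as the paper: both rely on the previously stated semi-direct product equivalences for the commutative associative and pre-Lie parts, and then expand the compatibility identity (\ref{Com-PreLie}) on $A\oplus V$, whose $V$-component is precisely the combination of (\ref{Com-repre-1}) and (\ref{Com-repre-2}). If anything, your coefficient-decoupling in $u,v,w$ makes the ``only if'' direction explicit, which the paper's computation leaves implicit.
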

\begin{proof}
  Obviously, $(A \oplus V, \ast_{\ltimes} )$ is a commutative associative algebra and $(A \oplus V, \bullet_{\ltimes} )$ is a pre-Lie algebra.
  For all $x+u, y+v, z+w \in  A \oplus V,$  by (\ref{Com-PreLie}), (\ref{Com-repre-1}) and (\ref{Com-repre-2}), we
have
\begin{eqnarray*}
&&  (x+u) \bullet_{\ltimes}\big((y+v) \ast_{\ltimes} (z+w)\big) \\
&&- \big((x+u) \bullet_{\ltimes} (y+v)\big) \ast_{\ltimes} (z+w) \\
&&-  (y+v) \ast_{\ltimes} \big((x+u) \bullet_{\ltimes} (z+w)\big)\\
&=&x \bullet (y \ast z) + l(x)\mu(y)w+l(x)\mu(z)v + r(y \ast z)u\\
&&- (x \bullet y) \ast z - \mu(x \bullet y)w - \mu(z)l(x)v-\mu(z)r(y)u\\
&&-y \ast (x \bullet z) - \mu(y)l(x)w-\mu(y)r(z)u - \mu(x \bullet z)v\\
&=&0.
\end{eqnarray*}
This finishes the proof.
 \end{proof}

Now we define a cohomology for Com-PreLie algebras similar as the Flato-Gerstenhaber-Voronov cohomology for Poisson algebras in \cite{FGV}.
Let $A$ be a Com-PreLie algebra and $V$ be a representation of it.
A linear map $f \in \mathrm{Hom} ({A}^{\otimes m} \otimes (\wedge^{n-1} {A} \otimes A), V)$ is called a {\em $(m,n)$-cochain} if
\begin{align*}
    \sum_{\sigma \in \mathrm{Sh}_{(i, m-i)}} (-1)^\sigma ~ f \big(  ( y_{\sigma^{-1} (1)} \otimes \cdots \otimes y_{\sigma^{-1} (i)} \otimes y_{\sigma^{-1} (i+1)} \otimes \cdots \otimes y_{\sigma^{-1} (m)} ) \otimes (x_1 \otimes \cdots \otimes x_n )   \big) = 0,
\end{align*}
for all $0 < i < m$ and $y_1, \ldots, y_m, x_1, \ldots, x_n \in {A}$. We denote the set of all $(m, n)$-cochains by $C^{m,n} ({A}, V)$. Then for any $k \geq 0$, the space of $k$-cochains is defined by $ C^k ({A}, V) := \bigoplus_{\substack{m+n = k \\ m \neq 1}} C^{m, n} ({A}, V)$
and the coboundary map $\delta^{k} :  C^k ({A}, V)  \rightarrow  C^{k+1} ({A}, V)$ is given by
\begin{align*}
    \delta^{k} (f) = \delta_\mathrm{H}^{m,n} (f) + (-1)^m ~ \! \delta_\mathrm{D}^{m,n} (f),
\end{align*}
for $ f \in C^{m,n}({A}, V)$ with $m + n = k ~ (m \neq 1)$. We define the map $\delta_\mathrm{H}^{m,n} : C^{m,n} ({A}, V) \rightarrow C^{m+1, n} ({A}, V)$ to be the Harrison coboundary operator
\begin{align*}
     &( \delta_\mathrm{H}^{m,n} f ) ( (y_1 \otimes \cdots \otimes y_{m+1}) ~\otimes~ (x_1 \otimes \cdots \otimes x_n) )\\
     =& \mu_{y_1} f ( (y_2 \otimes \cdots \otimes y_{m+1})\otimes (x_1 \otimes \cdots \otimes x_n) ) \\
     &+ \sum_{i=1}^m (-1)^i f (  (y_1 \otimes \otimes y_i \ast y_{i+1} \otimes \cdots \otimes y_{m+1}) \otimes (x_1 \otimes \cdots \otimes x_n) ) \\
     &+ (-1)^{m+1} ~ \!  \mu_{y_{m+1}}f ( (y_1 \otimes \cdots \otimes y_{m})\otimes (x_1 \otimes \cdots \otimes x_n) )
\end{align*}
and the coboundary operator $\delta^{m,n}_\mathrm{D} : C^{m,n} ({A}, V) \rightarrow C^{m, n+1} ({A}, V)$ by
\begin{align*}
    &(\delta^{m,n}_\mathrm{D} f)  (  (y_1 \otimes \cdots \otimes y_{m})\otimes (x_1 \otimes \cdots \otimes x_{n+1})) \\
    &= \sum_{i=1}^{n} (-1)^{i+1} l(x_i) f ( (y_1 \otimes \cdots \otimes y_{m}) \otimes (x_1 \otimes \cdots \otimes \widehat{x_i} \otimes \cdots  \otimes x_{n+1}) )  \\
    &\quad +\sum_{i=1}^{n}(-1)^{i+1} r\left(x_{n+1}\right) f\left((y_1 \otimes \cdots \otimes y_{m}) \otimes
    (x_{1}\otimes \cdots\otimes x_{i-1}\otimes x_{i+1}\otimes \cdots\otimes x_{n}\otimes x_{i})\right) \\
&\quad -\sum_{i=1}^{n}(-1)^{i+1} f\left((y_1 \otimes \cdots \otimes y_{m}) \otimes
(x_{1}\otimes \cdots\otimes x_{i-1}\otimes x_{i+1}\otimes\cdots\otimes x_{n}\otimes x_{i}\bullet x_{n+1})\right) \\
    & \quad - \sum_{j=1}^m f (  (y_1 \otimes \cdots \otimes  x_i\bullet y_j \otimes \cdots \otimes y_{m}) \otimes (x_1 \otimes \cdots \otimes \widehat{x_i} \otimes \cdots  \otimes x_{n+1})    ) \\
    & \quad + \sum_{1 \leq i < j \leq n} (-1)^{i+j} f (   (y_1 \otimes \cdots \otimes y_{m}) \otimes ([ x_i, x_j] \otimes x_1 \otimes \cdots \otimes \widehat{x_i} \otimes \cdots \otimes \widehat{x_j} \otimes \cdots \otimes x_{n+1})  ),
\end{align*}
for $f \in C^{m,n} (A, V)$, $y_1, \ldots, y_{m}, x_1, \ldots, x_{n+1} \in A$ and $[ x_i, x_j] =x_i\bullet x_j - x_j\bullet x_i $. Then it turns out that $\{ C^\bullet (A, V), \delta\}$ is a cochain complex. The set of all $n$-cocycles is denoted by $Z^n (A, V)$ and the set of all $n$-coboundaries is denoted by $B^n (A, V)$. The corresponding cohomology group is called the  cohomology of the Com-PreLie algebra $A$ with coefficients in the representation $V$, and is denoted by $H^\bullet (A, V)$.

For example, we denote the space of $k$-cochains $ C^k (A,V)$ by $C^{k}$, i.e.
\begin{eqnarray*}
C^{1} &=& \mathrm{Hom}(A,V), \\
C^{2} &=& \mathrm{Hom}(A\times A,V),\\
   C^{3}  &=& \mathrm{Hom}(A\times A\times A,V) .
   \end{eqnarray*}
For a 1-cochain $ N\in  C^{1}$, we define the
  map $\delta^1:C^{1} \to C^{2}  $ by
 \begin{eqnarray*}
\label{1co-V-1}\delta^1_{H} N(x,y) &=&\mu(x) N(y)-N(x \ast y)+\mu(y)N(x),\\
\label{1co-V-2}\delta^1_{D} N(x,y) &=&l(x)N(y)-N(x \bullet y)+r(y)N(x),
\end{eqnarray*}
 Thus a 1-cocycle is $ N\in  \mathrm{Hom}(A,V)$ such that
\begin{eqnarray}
&&\mu(x) N(y)-N(x \ast y)+\mu(y)N(x)=0,\label{A1co-1}\\
&&l(x)N(y)-N(x \bullet y)+r(y)N(x)=0.\label{A1co-2}
\end{eqnarray}
For a 2-cochain $ (\phi, \psi)\in  C^{2}$, where $\phi$ is a symmetric map,  we define the
  map $\delta^2:C^{2} \to C^{3}  $ by
   \begin{eqnarray*}
\label{A2co-V-1}  \delta^2_{H}\phi(x,y,z)
&=&\phi(x,y\ast z)+\mu(x)\phi(y,z)-\phi(x\ast y,z) -\mu(z)\phi(x,y),  \\
\label{A2co-V-2}  \delta^2_{D}\psi(x,y,z)
&=&\psi( x\bullet y, z)+r(z)\psi(x,y)-\psi(x, y \bullet z )-l(x)\psi(y,z)\notag\\
&&- \psi(y\bullet x, z)-r(z)\psi(y,x)+\psi(y, x\bullet z)+l(y)\psi(x,z),\\
\label{A2co-V-3} ( \delta^2_{H}\phi +\delta^2_{D} \psi)(x,y,z)
&=&\psi(x, y \ast z)-\mu(z)\psi(x,y)-\mu(y)\psi(x,z)\notag\\
&&-\phi(y,x \bullet z)+l(x)\phi(y,z)-\phi(x \bullet y,z).
\end{eqnarray*}

 Thus a 2-coboundary is $(\phi, \psi )\in C^{2}$
such that $(\phi, \psi)=\delta_1(N),$ i.e.
   \begin{eqnarray}
  \label{A1} &&\phi(x,y)=\mu(x) N(y)-N(x \ast y)+\mu(y)N(x),\\
   \label{A2}&&\psi(x,y)=l(x)N(y)-N(x \bullet y)+r(y)N(x),
   \end{eqnarray}
  and a 2-cocycle is a pair$ (\phi, \psi)\in  \mathrm{Hom}(A \times A,V)$ such that
\begin{eqnarray}
\label{A2co-1}&&\phi(x,y\ast z)+\mu(x)\phi(y,z)-\phi(x\ast y,z) -\mu(z)\phi(x,y)=0,\\
\label{A2co-2}&&\psi( x\bullet y, z)+r(z)\psi(x,y)-\psi(x, y \bullet z )-l(x)\psi(y,z)\notag\\
&&- \psi(y\bullet x, z)-r(z)\psi(y,x)+\psi(y, x\bullet z)+l(y)\psi(x,z)=0,\\
\label{A2co-3}&&\psi(x, y \ast z)+l(x)\phi(y,z)-\phi(x \bullet y,z)-\mu(z)\psi(x,y)\notag\\
&&-\phi(y,x \bullet z)-\mu(y)\psi(x,z)=0.
\end{eqnarray}
Further, two  $2$-cocycles $(\phi, \psi)$ and $(\phi', \psi')$ are {\em cohomologous} if there exists a linear map $f : A \rightarrow V$ such that for all $x, y \in A,$
\begin{eqnarray}
    \label{cobound-1} \phi(x, y) - \phi' (x, y) = \mu(x) f (y)- f (x \ast y ) + \mu(y) f (x), \\
     \label{cobound-2}\psi (x, y) - \psi' (x, y) =l(x) f (y) - f (x\bullet y ) + r(y) f(x).
\end{eqnarray}
\begin{pro}
The space of 2-coboundaries is contained in the space of 2-cocycles, i.e. $D^2 = D_2 \cdot  D_1 = 0.$
\end{pro}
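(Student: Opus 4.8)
The plan is to establish $\delta^2\circ\delta^1=0$ by direct substitution. Fix a $1$-cochain $N\in C^1=\Hom(A,V)$ and put $(\phi,\psi):=\delta^1(N)$, so that $\phi$ and $\psi$ are given by \eqref{A1} and \eqref{A2}. First I would observe that, since $\ast$ is commutative, the formula $\phi(x,y)=\mu(x)N(y)-N(x\ast y)+\mu(y)N(x)$ is automatically symmetric in $x,y$, so $(\phi,\psi)$ genuinely lies in $C^2$. It then remains to insert \eqref{A1} and \eqref{A2} into each of the three $2$-cocycle identities \eqref{A2co-1}, \eqref{A2co-2}, \eqref{A2co-3} and to check that the result vanishes. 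In each case the expanded expression decomposes into a few small packets of terms, each of which collapses to zero by a single structural identity, so no long computation is needed.

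Identity \eqref{A2co-1} involves $\phi$ only. Expanding produces terms of the shapes $\mu(\cdot)\mu(\cdot)N(\cdot)$, $\mu(\cdot)N(\cdot\ast\cdot)$, and $N((\cdot\ast\cdot)\ast\cdot)$; the two pure-$N$ terms cancel by associativity of $\ast$, and the rest cancel in pairs using commutativity of $\ast$ and the representation axiom $\mu(x\ast y)=\mu(x)\mu(y)$ for $(V,\mu)$ (which in particular makes the $\mu(x)$ pairwise commuting). This is just the low-degree square-zero property of the Harrison complex of $(A,\ast)$ with coefficients in $(V,\mu)$. Identity \eqref{A2co-2} involves $\psi$ only; expanding $\psi(a,b)=l(a)N(b)-N(a\bullet b)+r(b)N(a)$, the pure-$N$ terms cancel by the left pre-Lie identity, the terms with a single $l$ or $r$ in front of some $N(\cdot\bullet\cdot)$ cancel directly, and the doubly nested terms cancel by the two defining identities of the pre-Lie representation $(V,l,r)$. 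Equivalently, \eqref{A2co-2} is the Dzhumadil'daev $2$-cocycle condition $\delta_D(\delta_D N)=0$, which is already known.

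The crux is the mixed identity \eqref{A2co-3}, coupling $\phi$ and $\psi$. Substituting \eqref{A1} and \eqref{A2} produces eighteen terms that I would sort into packets as follows. The three terms with no operator in front are $-N(x\bullet(y\ast z))+N((x\bullet y)\ast z)+N(y\ast(x\bullet z))$, which vanish by the Com-PreLie compatibility \eqref{Com-PreLie}. The three terms ending in $N(z)$, namely $l(x)\mu(y)N(z)-\mu(x\bullet y)N(z)-\mu(y)l(x)N(z)$, vanish by \eqref{Com-repre-1}; likewise the three terms ending in $N(y)$ vanish by \eqref{Com-repre-1} with $z$ in place of $y$. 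The three terms ending in $N(x)$, namely $r(y\ast z)N(x)-\mu(z)r(y)N(x)-\mu(y)r(z)N(x)$, vanish by \eqref{Com-repre-2}. The remaining six terms come in the opposite-sign pairs $\pm l(x)N(y\ast z)$, $\pm\mu(z)N(x\bullet y)$, $\pm\mu(y)N(x\bullet z)$ and cancel on the nose. Hence the left-hand side of \eqref{A2co-3} is identically zero.

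I expect the only real obstacle to be the bookkeeping in this last step: one must expand \eqref{A2co-3} with care and recognise \eqref{Com-PreLie}, \eqref{Com-repre-1}, and \eqref{Com-repre-2} in the right spots. Conceptually nothing more is happening than the fact that $\{C^\bullet(A,V),\delta\}$ is obtained by gluing the Harrison complex of $(A,\ast)$ in the $\ast$-direction to the Dzhumadil'daev complex of $(A,\bullet)$ in the $\bullet$-direction along the three compatibility conditions, so that the total differential $\delta=\delta_H+(-1)^m\delta_D$ squares to zero; the three displayed identities are precisely the homogeneous components of $\delta^2(N)$. A tidier but essentially equivalent route would be to set up that (bi)complex in all degrees first and then deduce $\delta^2=0$ abstractly.
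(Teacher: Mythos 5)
Your proposal is correct and follows essentially the same route as the paper: substitute $(\phi,\psi)=\delta^1(N)$ into the three identities and verify the mixed identity (\ref{A2co-3}) by expanding into eighteen terms and cancelling them in exactly the packets you describe (the pure-$N$ terms via (\ref{Com-PreLie}), the terms ending in $N(y)$ or $N(z)$ via (\ref{Com-repre-1}), those ending in $N(x)$ via (\ref{Com-repre-2}), and the remaining opposite-sign pairs directly), which is precisely the paper's labelled cancellation. The paper dispatches (\ref{A2co-1}) and (\ref{A2co-2}) with ``checked similarly,'' so your observation that they are the known square-zero properties of the Harrison and Dzhumadil'daev complexes is a fine (and slightly more informative) way to handle the same step.
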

\begin{proof}
  We should verify that if $(\phi, \psi)= D_1(N)$, then
$D_2(\phi,\psi )= D_2D_1(N) = 0.$ Assume
$(\phi, \psi )$ is given by (\ref{A1})-(\ref{A2}), we will verify that it must satisfy Eqs.~(\ref{A2co-1})-(\ref{A2co-3}). We compute Eq.~(\ref{A2co-3}) as follows:
\begin{eqnarray*}
&&\psi(x, y \ast z)+l(x)\phi(y,z)-\phi(x\bullet y,z)-\mu(z)\psi(x,y)-\phi(y,x\bullet z)-\mu(y)\psi(x,z)\\
&=& \underbrace{l(x)N(y \ast z)}_E  \underbrace{-N(x \bullet (y \ast z))}_A\underbrace{+r(y \ast z)N(x)}_D\\
&&\underbrace{l(x)\mu(y) N(z)}_C\underbrace{-l(x)N(y \ast z)}_E\underbrace{+l(x)\mu(z)N(y) }_B\\
&&\underbrace{-\mu(x\bullet y) N(z)}_C\underbrace{+N((x\bullet y) \ast z)}_A\underbrace{-\mu(z)N(x\bullet y)}_F\\
&&\underbrace{-\mu(z)l(x)N(y)}_B\underbrace{+\mu(z)N(x \bullet y)}_F\underbrace{-\mu(z)r(y)N(x)}_D\\
&&\underbrace{-\mu(y) N(x\bullet z)}_G\underbrace{+N(y \ast (x\bullet z))}_A\underbrace{-\mu(x\bullet z)N(y)}_B\\
&&\underbrace{-\mu(y)l(x)N(z)}_C\underbrace{+\mu(y)N(x \bullet z)}_G\underbrace{-\mu(y)r(z)N(x)}_D\\
&=&0
\end{eqnarray*}
where we use the fact that $A$ a Com-PreLie algebra and $V$ is a representation of the Com-PreLie algebra $A$.
The other equalities can be checked similarly. This completes the proof.
\end{proof}

\begin{defi}
The second cohomology group of $A$ with coefficients in $V$ is defined as the quotient
$$\mathcal{H}^2(A ,V)=\mathcal{Z }^2 (A ,V)/\mathcal{B }^2 ( A ,V),$$
where the set of 2-cocycles by $\mathcal{Z }^2(A ,V )$, the set of 2-coboundaries by  $ \mathcal{B}^2 (A ,V)$ and the second cohomology group by $\mathcal{H}^2 (A,V)$.
\end{defi}

\section{Abelian extensions} \label{sec-3}
In this section, we study abelian extensions of a Com-PreLie algebra by a given representation.
We classify abelian extensions of a Com-PreLie algebra $(A, \ast, \bullet)$ via the second cohomology group with coefficients in a representation.

Let $A$ be a Com-PreLie algebra and let $V$ be a vector spaces such that $V$ is the trivial Com-PreLie algebra.

\begin{defi}
    An \textbf{ abelian extension} of a Com-PreLie algebra $A$ by a vector spaces $V$ is a short exact sequence
\begin{eqnarray}\label{ab-ext-1}
\mathcal{E}:\xymatrix{
0 \ar[r] & V  \ar[r]^{i}  &  \widehat{A}  \ar[r]^{j} & A  \ar[r] & 0
}
\end{eqnarray}
of Com-PreLie algebras.
\end{defi}
Thus, an abelian extension is given by a Com-PreLie algebra $A$ equipped with homomorphisms $i: V \rightarrow \widehat{A} $ and $j:  \widehat{A} \rightarrow A$ of Com-PreLie algebras such that (\ref{ab-ext-1}) is a short exact sequence.

Consider an abelian extension as of (\ref{ab-ext-1}). A \textbf{section} of the map $j:  \widehat{A}  \rightarrow A $ is given by a linear map $s : A \rightarrow \widehat{A}$ satisfying $j \circ s = \mathrm{id}_A$. Let $s$ be any section of the map $j$. Then we define maps
$$(\mu,l,r):  A \to \End (V),$$
 respectively by
 $$\mu(x)u= s(x) \ast_{\widehat{A}} u,\quad l(x)u=s(x) \bullet_{\widehat{A}} u,\quad r(x)u=u \bullet_{\widehat{A}} s(x),$$
 for all $x \in A$, $u \in V$. Then we have the following result.
\begin{pro}\label{rep-pro}
 With the above notations, the  quadruple $(V, \mu, l ,r)$ is a representation of $A.$
    Moreover, this representation is independent on the choice of sections.
\end{pro}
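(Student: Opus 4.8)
The plan is to verify directly that the four operations $\mu, l, r$ defined via a section $s$ satisfy the defining axioms of a representation of a Com-PreLie algebra (Definition \ref{biaoshi-Com-PreLie}), and then to handle the independence on the section as a separate short argument. The starting observation is that for $u \in V = i(V)$ and $x \in A$, the element $s(x)\ast_{\widehat A} u$ lies in $\ker j$ because $j$ is an algebra homomorphism and $j(u)=0$; hence $\mu(x)u$, and similarly $l(x)u$ and $r(x)u$, indeed land in $V$, so the maps are well defined. A second recurring fact I would isolate first: for any $x,y\in A$ we have $s(x)\ast_{\widehat A} s(y) - s(x\ast y)\in V$ and $s(x)\bullet_{\widehat A}s(y) - s(x\bullet y)\in V$, since $j$ applied to these differences is zero; moreover, multiplying such a ``defect'' element of $V$ on either side by $s(z)$ and by another element of $V$ behaves according to $\mu,l,r$ and the fact that $V$ is an abelian Com-PreLie algebra (so products of two elements of $V$ vanish).

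Next I would check each axiom in turn. That $(V,\mu)$ is a representation of the commutative associative algebra $(A,\ast)$ follows by computing $s(x\ast y)\ast_{\widehat A}u$ two ways: on one hand it equals $\mu(x\ast y)u$; on the other, writing $s(x\ast y) = s(x)\ast_{\widehat A}s(y) - w$ with $w\in V$, using associativity of $\ast_{\widehat A}$ and that $w\ast_{\widehat A}u=0$, we get $s(x)\ast_{\widehat A}(s(y)\ast_{\widehat A}u) = \mu(x)\mu(y)u$. That $(V,l,r)$ is a representation of $(A,\bullet)$ is the same bookkeeping applied to the left pre-Lie identity for $\bullet_{\widehat A}$, evaluated on $s(x), s(y), u$ and on $u, s(x), s(y)$, again discarding all terms in which two $V$-factors meet. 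The cross relations \eqref{Com-repre-1} and \eqref{Com-repre-2} come from the Com-PreLie compatibility \eqref{Com-PreLie} in $\widehat A$: for \eqref{Com-repre-1} expand $s(x)\bullet_{\widehat A}\big(s(y)\ast_{\widehat A}u\big)$ using \eqref{Com-PreLie}, noting $s(x)\bullet_{\widehat A}s(y) = s(x\bullet y)+($element of $V)$ and that this $V$-element $\ast_{\widehat A}u$ is zero; for \eqref{Com-repre-2} expand $\big(s(x)\ast_{\widehat A}s(y)\big)\bullet_{\widehat A}u$ — wait, rather use \eqref{Com-PreLie} with the roles arranged so that $u$ sits in the $\ast$-slot — i.e.\ apply the axiom to get $r(x\ast y)u$ in terms of $r(x)$ and $r(y)$, again absorbing the correction terms into $V$ and killing double-$V$ products.

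For the independence statement, let $s'$ be another section; then $s'(x) - s(x)\in\ker j = V$ for every $x$, so $s'(x) = s(x) + \phi(x)$ for a linear map $\phi:A\to V$. Computing $\mu'(x)u = s'(x)\ast_{\widehat A}u = s(x)\ast_{\widehat A}u + \phi(x)\ast_{\widehat A}u = \mu(x)u$, since $\phi(x)\ast_{\widehat A}u$ is a product of two elements of $V$ and hence zero; the same computation with $\bullet_{\widehat A}$ on both sides gives $l' = l$ and $r' = r$. Hence the representation does not depend on the choice of section.

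I expect the main obstacle to be purely organizational rather than conceptual: there are four axioms, several of them involving three or four terms each, and in every expansion one must carefully track which ``correction'' elements lie in $V$ and argue that every term containing two $V$-factors vanishes because $V$ carries the trivial (abelian) Com-PreLie structure. The cleanest way to control this is to first prove the two auxiliary lemmas mentioned above (products land in $V$; ``defect'' elements behave predictably), and then each axiom verification becomes a one-line substitution. No genuinely hard step is anticipated; the content is that the axioms of a Com-PreLie representation are exactly the shadow, modulo $V\cdot V = 0$, of the Com-PreLie axioms of $\widehat A$.
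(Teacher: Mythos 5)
Your proposal is correct and follows essentially the same route as the paper: well-definedness via $\ker j \cong V$, verification of all axioms by evaluating the Com-PreLie identities of $\widehat{A}$ on elements $s(x), s(y), u$ and discarding terms with two $V$-factors (since $V$ carries the trivial structure), and section-independence from $s(x)-s'(x)\in V$. Your explicit isolation of the two auxiliary facts (defect elements lie in $V$; $V\cdot V=0$ in $\widehat{A}$) is merely a tidier organization of what the paper uses implicitly, and the small wording slip about where $u$ sits when deriving \eqref{Com-repre-2} does not affect the argument.
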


\begin{proof}
Firstly, we show that the definition is well-defined. Since $\mathrm{ker} (j) \cong V$, then for $u \in V$, we have $j(u) = 0.$
By the fact that $j$ is a homomorphism between $\widehat{A}$ and $A$, we get
$$j(\mu(x)u)=j(s(x) \ast_{\widehat{A}} u)=j(s(x)) \ast j(u)=j(s(x)) \ast 0=0.$$
Thus $\mu(x)u \in \mathrm{ker} (j) \cong V.$ Similar computations show that
$$j(l(x)u)=j(s(x) \bullet_{\widehat{A}} u)=j(s(x)) \bullet j(u)=j(s(x)) \bullet 0=0,$$
$$j(r(x)u)=j(u \bullet_{\widehat{A}} s(x))=j(u) \bullet j(s(x))=0 \bullet j(s(x))=0.$$
Thus $l(x)u, r(x)u \in \mathrm{ker} (j) \cong V.$

We will show that these maps are independent of the choice of $s$. In fact, if we choose another section
$s': A \rightarrow \widehat{A}$, then $j(s(x) - s'(x)) = x-x = 0$, i.e. $s(x) - s'(x)\in \mathrm{ker} (j) \cong V$. Thus we get $s(x) \ast_{\widehat{A}} u=s'(x) \ast_{\widehat{A}} u$, $s(x) \bullet_{\widehat{A}} u=s'(x) \bullet_{\widehat{A}} u$ and $u \bullet_{\widehat{A}} s(x)=u \bullet_{\widehat{A}} s'(x)$,   which implies that the maps are independent on the choice of $s$. Therefore the representation structures are
well-defined.

Secondly, we check that $(V, \mu, l ,r)$ is a representation of $A.$
To establish this theory, verifying all conditions in Definition \ref{biaoshi-Com-PreLie} is essential, and this verification is contingent upon the fact that $\widehat{A}=(\widehat{A} , \ast_{\widehat{A}},\bullet_{\widehat{A}} )$ is a Com-PreLie algebra.

By the fact that $(\widehat{A},\ast_{\widehat{A}})$ is an associative algebra, we have
 \begin{eqnarray*}
&& \mu(x  \ast y)u\\
&=&s(x \ast  y)\ast_{\widehat{A}}(u)\\
&=&(s(x)\ast_{\widehat{A}}s(y))\ast_{\widehat{A}}(u)\\
&=&s(x)\ast_{\widehat{A}}(s(y)\ast_{\widehat{A}}(u))\\
&=& \mu(x)\mu(y) u,
 \end{eqnarray*}
which implies that $(V, \mu)$ is a representation of $(A,\ast)$.

Since $(\widehat{A},\bullet_{\widehat{A}})$ is a pre-Lie algebra, we get
 \begin{eqnarray*}
&&l(x \bullet y)u-l(x)l(y)u\\
&=&s(x \bullet y) \bullet_{\widehat{A}} u-s(x) \bullet_{\widehat{A}}(s(y) \bullet_{\widehat{A}} u)\\
&=&(s(x)\bullet_{\widehat{A}}s(y)) \bullet_{\widehat{A}} u-s(x) \bullet_{\widehat{A}}(s(y) \bullet_{\widehat{A}} u)\\
&=&(s(y)\bullet_{\widehat{A}}s(x)) \bullet_{\widehat{A}} u-s(y) \bullet_{\widehat{A}}(s(x) \bullet_{\widehat{A}} u)\\
&=& s(y \bullet x) \bullet_{\widehat{A}} u-s(y) \bullet_{\widehat{A}}( s(x) \bullet_{\widehat{A}} u)\\
&=&l(y \bullet x)u-l(y)l(x)u.
\end{eqnarray*}
Similarly, we also obtain
$$r(y)l(x)-l(x)r(y)=r(y)r(x)-r(x\bullet y).$$
This verify $(V, l, r )$ is a representation of $(A, \bullet).$

For the compatibility conditions (\ref{Com-repre-1}) and (\ref{Com-repre-2}), by direct computation
 \begin{eqnarray*}
 &&\mu(x \bullet y)u+\mu(y)l(x)u\\
 &=&s(x \bullet y)\ast_{\widehat{A}}u +s(y)\ast_{\widehat{A}}(s(x) \bullet_{\widehat{A}} u)\\
 &=& (s(x)\bullet_{\widehat{A}} s(y))\ast_{\widehat{A}}u+s(y)\ast_{\widehat{A}}(s(x) \bullet_{\widehat{A}} u)\\
 &=&s(x)\bullet_{\widehat{A}} (s(y)\ast_{\widehat{A}} u)\\
 &=&l(x)\mu(y)u,
 \end{eqnarray*}
 where we exploit the fact that $\widehat{A}$ is a Com-PreLie algebra. In the same manner, (\ref{Com-repre-2}) is fulfilled as well.
 The proof is finished.
\end{proof}

\begin{defi}
    Two abelian extensions (two rows of the diagram (\ref{abel-diag})) are said to be \textbf{isomorphic} if there exists an isomorphism $F: \widehat{A} \rightarrow \widehat{A}' $ of Com-PreLie algebra making the below diagram commutative
    \begin{eqnarray}\label{abel-diag}
        \xymatrix{
0 \ar[r] & V \ar@{=}[d] \ar[r]^{i}  &  \widehat{A} \ar[d]^{F} \ar[r]^{j} & A  \ar[r] \ar@{=}[d] & 0 \\
0 \ar[r] & V  \ar[r]_{i'}  &  \widehat{A}' \ar[r]_{j'} & A \ar[r] & 0.
}
    \end{eqnarray}
\end{defi}

Let $A$ be a Com-PreLie algebra and $(V, \mu,l,r)$ be a given representation of it. We define $\mathrm{Ext} (A ,V)$ to be the set of all isomorphism classes of abelian extensions of $A$ by the vector spaces $V$ for which the induced representation is the prescribed one. Then we have the following result.

\begin{pro}
 Let $A$ be a Com-PreLie algebra and $(V, \mu,l,r)$ be a representation of it. Then there is a well-defined map
 \begin{align*}
   \delta : \mathrm{Ext}(A ,V) \rightarrow \mathcal{H}^2 (A ,V).
\end{align*}
\end{pro}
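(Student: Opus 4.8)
The plan is to construct the map $\delta$ explicitly by choosing a section, extracting a $2$-cocycle from the abelian extension, and then checking that the resulting cohomology class is independent of all choices made. First I would fix an abelian extension $\mathcal{E}: 0 \to V \xrightarrow{i} \widehat{A} \xrightarrow{j} A \to 0$ and choose a linear section $s: A \to \widehat{A}$ with $j \circ s = \mathrm{id}_A$. Since $j$ is a Com-PreLie algebra homomorphism, for $x, y \in A$ the elements $s(x) \ast_{\widehat{A}} s(y) - s(x \ast y)$ and $s(x) \bullet_{\widehat{A}} s(y) - s(x \bullet y)$ lie in $\ker(j) \cong V$. So I would define
\begin{align*}
\phi(x,y) &= s(x) \ast_{\widehat{A}} s(y) - s(x \ast y), \\
\psi(x,y) &= s(x) \bullet_{\widehat{A}} s(y) - s(x \bullet y),
\end{align*}
viewed as maps $A \times A \to V$. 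Commutativity of $\ast_{\widehat{A}}$ forces $\phi$ to be symmetric, so $(\phi, \psi) \in C^2(A, V)$.

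Next I would verify that $(\phi, \psi)$ is a $2$-cocycle, i.e. that it satisfies Eqs.~(\ref{A2co-1})--(\ref{A2co-3}). Each of these is obtained by expanding the relevant associativity / pre-Lie / compatibility identity for $\widehat{A}$ along the section, substituting the definitions of $\phi$, $\psi$, $\mu$, $l$, $r$ (the latter three being the induced representation from Proposition~\ref{rep-pro}), and collecting terms: the ``structure constant'' parts cancel because $A$ is a Com-PreLie algebra, and the remaining terms rearrange into exactly the cocycle conditions. For instance, (\ref{A2co-1}) comes from associativity $s(x) \ast_{\widehat{A}} (s(y) \ast_{\widehat{A}} s(z)) = (s(x) \ast_{\widehat{A}} s(y)) \ast_{\widehat{A}} s(z)$, (\ref{A2co-2}) from the pre-Lie identity, and (\ref{A2co-3}) from the mixed compatibility relation (\ref{Com-PreLie}) in $\widehat{A}$. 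This is a routine but lengthy bookkeeping computation, entirely parallel to the proof that $D_2 D_1 = 0$ above.

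Then I would show the class $[(\phi, \psi)] \in \mathcal{H}^2(A, V)$ does not depend on the choice of section. Given two sections $s, s'$, their difference $f := s - s' : A \to \widehat{A}$ takes values in $\ker(j) \cong V$, and a direct expansion of $\phi - \phi'$ and $\psi - \psi'$ shows these differences are exactly $\mu(x) f(y) - f(x \ast y) + \mu(y) f(x)$ and $l(x) f(y) - f(x \bullet y) + r(y) f(x)$ respectively, i.e. $(\phi - \phi', \psi - \psi') = \delta^1(f)$ is a $2$-coboundary in the sense of (\ref{cobound-1})--(\ref{cobound-2}). Finally I would check that isomorphic abelian extensions give the same class: if $F : \widehat{A} \to \widehat{A}'$ is an isomorphism as in diagram~(\ref{abel-diag}), then $F \circ s$ is a section of $j'$, and because $F$ is a Com-PreLie morphism fixing $V$ pointwise, the cocycle it produces coincides with $(\phi, \psi)$; combined with section-independence for $\widehat{A}'$ this gives well-definedness on isomorphism classes. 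Hence $\delta(\mathcal{E}) := [(\phi, \psi)]$ is a well-defined map $\mathrm{Ext}(A, V) \to \mathcal{H}^2(A, V)$.

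The main obstacle is the cocycle verification in the second step: there are three identities to check, the mixed one (\ref{A2co-3}) involving all of $\mu$, $l$, $r$, and each expansion produces many terms that must be paired off using the Com-PreLie axioms for $\widehat{A}$ together with the definitions of the induced action. Everything else — symmetry of $\phi$, the coboundary computation, and the isomorphism-invariance — is short once the notation is set up, since it only uses that $\ker(j) \cong V$ is an ideal with the prescribed action and that $F$ restricts to the identity on $V$.
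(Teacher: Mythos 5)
Your proposal is correct and follows essentially the same route as the paper: extract $(\phi,\psi)$ from a section, verify the cocycle identities (\ref{A2co-1})--(\ref{A2co-3}) by expanding the Com-PreLie axioms of $\widehat{A}$ along $s$, and use that $F\circ s$ is a section of $j'$ with $F|_V=\mathrm{id}_V$ to handle isomorphic extensions. If anything, you are slightly more complete than the paper, since you spell out the section-independence computation $(\phi-\phi',\psi-\psi')=\delta^1(f)$ with $f=s-s'$ (using that $V$ is an abelian ideal, so $f(x)\ast_{\widehat{A}}f(y)=0$), which the paper merely asserts at this point.
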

\begin{proof}
Let (\ref{ab-ext-1}) be an abelian extension. For any section $s$ of the map $j$, we define two bilinear maps
    \begin{eqnarray*}
  ~ \phi :A \times A \rightarrow V ,\\
    ~ \psi: A \times A \rightarrow V ,
    \end{eqnarray*}
 by
 $$\phi(x,y)=s(x) \ast_{\widehat{A}} s(y) -s(x\ast y),$$
 $$\psi(x,y)=s(x) \bullet_{\widehat{A}} s(y) -s(x\bullet y),$$
 for all $x, y \in A$. Then  $(\phi,\psi) \in C^2$ defined by the above notations is a $2$-cocycle of $A$ with coefficients in $V$, where the representation is given by Proposition \ref{rep-pro}, i.e., $(\phi,\psi)$ satisfies the Eqs.~(\ref{A2co-1})-(\ref{A2co-3}). By $\widehat{A}$ is a Com-PreLie algebra that satisfies the compatibility conditions.
We have the equality
$$ s(x) \bullet_{\widehat{A}} (s(y) \ast_{\widehat{A}} s(z)) = (s(x) \bullet_{\widehat{A}} s(y)) \ast_{\widehat{A}} s(z) + s(y) \ast_{\widehat{A}} (s(x) \bullet_{\widehat{A}} s(z)),$$
   we get that the right hand side is equal to
     \begin{eqnarray*}
     && (s(x) \bullet_{\widehat{A}} s(y)) \ast_{\widehat{A}} s(z) + s(y) \ast_{\widehat{A}} (s(x) \bullet_{\widehat{A}} s(z))\\
     &=&(\psi(x,y)+s(x\bullet y))\ast_{\widehat{A}} s(z)+ s(y) \ast_{\widehat{A}} (\psi(x,z)+s(x\bullet z))\\
     &=&\psi(x,y)\ast_{\widehat{A}} s(z)+s(x\bullet y)\ast_{\widehat{A}} s(z)+s(y) \ast_{\widehat{A}} \psi(x,z)+s(y) \ast_{\widehat{A}} s(x\bullet z)\\
     &=&\mu(z)\psi(x,y)+\phi(x\bullet y,z)+s((x\bullet y)\ast z)+\mu(y)\psi(x,z)+\phi(y,x\bullet z)+s(y\ast (x\bullet z))
     \end{eqnarray*}
Similarily, the left hand side is equal to
$$ l(x) \phi(y,z)+\psi(x,y\ast z)+s(x\bullet (y\ast z))$$
  Thus we have
 \begin{eqnarray*}
  &&l(x) \phi(y,z)+\psi(x,y\ast z)\\
  &=&\mu(z)\psi(x,y)+\phi(x\bullet y,z)+\mu(y)\psi(x,z)+\phi(y,x\bullet z),
  \end{eqnarray*}
   which implies that Eq.~(\ref{A2co-3}) holds. By similar calculations, it can be verified that Eqs.~(\ref{A2co-1})-(\ref{A2co-2}) also hold.
  Hence the abelian extension (\ref{ab-ext-1}) gives rise to a cohomology class in $\mathcal{H}^2( A ,V)$. Moreover, the cohomology class does not depend on the choice of the section.

 Next, we consider two equivalent abelian extensions as of (\ref{abel-diag}).
 If $s$ is any section of the map $j$, then we have $j' \circ (F \circ s) = j \circ s = \mathrm{id}_A$.
 This shows that $(F \circ s)$ is a section of the map $j'$.
 If $(\phi'.\psi')$ is the $2$-cocycle corresponding to the second abelian extension and its section $F \circ s$ then
\begin{eqnarray*}
   \phi' (x,y)
  &=& (F \circ s)(x)\ast_{\widehat{A}}(F \circ s)(y)-(F \circ s)(x \ast y)\\
  &=& F \big( s(x)\ast_{\widehat{A}}s(y)-s(x \ast y)  \big) \\
  &=& s(x)\ast_{\widehat{A}}s(y)-s(x \ast y) \quad (\because ~ F|_V = \mathrm{id}_V) \\
  &=& \phi(x,y).
   \end{eqnarray*}
    Similarly, $\psi'(x,y)=\psi(x,y).$
Hence we have $(\phi,\psi) = (\phi',\psi')$ and thus they corresponds to the same element in $\mathcal{H}^2( A ,V)$. Thus, we obtain a well-defined map
\begin{align*}
      \delta : \mathrm{Ext}(A ,V) \rightarrow \mathcal{H}^2(A ,V).
\end{align*}

\end{proof}

\begin{pro}
 Let $A$ be a Com-PreLie algebra and $V$ be a representation of it. Then there is a well-defined map
 \begin{align*}
   \Omega :  \mathcal{H}^2 (A ,V) \rightarrow \mathrm{Ext}(A ,V).
\end{align*}
\end{pro}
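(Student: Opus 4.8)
The plan is to construct the map $\Omega$ explicitly by associating to each $2$-cocycle a concrete abelian extension, then to verify that cohomologous cocycles give isomorphic extensions. Given a $2$-cocycle $(\phi, \psi) \in \mathcal{Z}^2(A, V)$, I would equip the vector space $\widehat{A} := A \oplus V$ with the two bilinear operations
\begin{eqnarray*}
(x + u) \ast_{\widehat{A}} (y + v) &=& x \ast y + \mu(x)v + \mu(y)u + \phi(x, y), \\
(x + u) \bullet_{\widehat{A}} (y + v) &=& x \bullet y + l(x)v + r(y)u + \psi(x, y),
\end{eqnarray*}
for all $x, y \in A$ and $u, v \in V$. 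The first step is to check that $(\widehat{A}, \ast_{\widehat{A}}, \bullet_{\widehat{A}})$ is a Com-PreLie algebra. Commutativity of $\ast_{\widehat{A}}$ follows from commutativity of $\ast$, symmetry of $\phi$, and symmetry of $\mu(x)v + \mu(y)u$; associativity of $\ast_{\widehat{A}}$ is exactly the cocycle condition (\ref{A2co-1}) together with the representation axioms for $(V, \mu)$. The left pre-Lie identity for $\bullet_{\widehat{A}}$ reduces to (\ref{A2co-2}) together with the pre-Lie representation axioms for $(V, l, r)$. Finally, the compatibility condition (\ref{Com-PreLie}) for $\widehat{A}$ unpacks, using (\ref{Com-repre-1}) and (\ref{Com-repre-2}), precisely into the mixed cocycle condition (\ref{A2co-3}). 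With $i : V \to \widehat{A}$ the inclusion and $j : \widehat{A} \to A$ the projection, the sequence $\mathcal{E}_{(\phi,\psi)} : 0 \to V \to \widehat{A} \to A \to 0$ is then an abelian extension, and using the section $s(x) = x + 0$ one sees immediately that the induced representation on $V$ (in the sense of Proposition \ref{rep-pro}) is the prescribed $(\mu, l, r)$. We set $\Omega([(\phi, \psi)])$ to be the isomorphism class of this extension.

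The second step is well-definedness: if $(\phi, \psi)$ and $(\phi', \psi')$ are cohomologous via a linear map $f : A \to V$ as in (\ref{cobound-1})--(\ref{cobound-2}), I would define $F : \widehat{A}_{(\phi,\psi)} \to \widehat{A}_{(\phi',\psi')}$ by $F(x + u) = x + (u + f(x))$. One checks $F \circ i = i'$, $j' \circ F = j$, and that $F$ is linear and bijective with inverse $x + u \mapsto x + (u - f(x))$. The content is that $F$ is a homomorphism of Com-PreLie algebras; for the associative part this amounts to the identity $\phi(x,y) + f(x \ast y) = \phi'(x, y) + \mu(x) f(y) + \mu(y) f(x)$, which is exactly (\ref{cobound-1}), and similarly the pre-Lie part is (\ref{cobound-2}). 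Hence $F$ realizes an isomorphism of abelian extensions in the sense of diagram (\ref{abel-diag}), so $\Omega$ does not depend on the chosen cocycle representative.

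I expect the main obstacle to be the bookkeeping in the first step — verifying that all three cocycle equations (\ref{A2co-1})--(\ref{A2co-3}), in combination with the five representation axioms (the two for $(V,\mu)$, the two for $(V,l,r)$, and (\ref{Com-repre-1})--(\ref{Com-repre-2})), assemble to give exactly the defining identities of a Com-PreLie structure on $\widehat{A}$, with no residual terms. This is entirely analogous to, and in fact dual to, the computation already carried out in the proposition showing $D^2 = 0$ and in the proof that $\delta$ is well-defined, so the matching of terms is guaranteed; it is simply a matter of careful expansion. (One may also note, though it is not needed for this proposition, that $\Omega$ and the earlier map $\delta$ are mutually inverse, which is why both are bijections onto $\mathrm{Ext}(A, V)$ and $\mathcal{H}^2(A, V)$ respectively.) No deep idea is required beyond the standard "extension $\leftrightarrow$ cocycle" dictionary; the only care needed is to respect the sign conventions fixed by $\delta^k = \delta_{\mathrm H}^{m,n} + (-1)^m \delta_{\mathrm D}^{m,n}$.
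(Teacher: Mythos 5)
Your proposal is correct and follows essentially the same route as the paper: you build $\widehat{A}=A\oplus V$ with the operations twisted by $(\phi,\psi)$, reduce the Com-PreLie axioms to the cocycle conditions (\ref{A2co-1})--(\ref{A2co-3}) together with the representation axioms, and show well-definedness via the map $F(x+u)=x+u+f(x)$ built from the cochain $f$ realizing the cohomology. This matches the paper's argument step for step (and your explicit remark that the induced representation is the prescribed one is a small clarifying addition, not a deviation).
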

\begin{proof}
Let $(\phi,\psi)$ be a $2$-cocycle.
 Take $\widehat{A} = A \oplus V$. Then we define maps
     \begin{eqnarray*}
     ~\ast_{\widehat{A}}:\widehat{A} \times \widehat{A} \longrightarrow  \widehat{A},\\
     ~\bullet_{\widehat{A}}:\widehat{A}\times \widehat{A} \longrightarrow \widehat{A},\\
    \end{eqnarray*}
 by
$$ (x+u)\ast_{\widehat{A}}(y+v)=x\ast y+\mu(x)v+\mu(y)u+\phi(x,y),$$
$$ (x+u)\bullet_{\widehat{A}}(y+v)=x\bullet y+l(x)v+r(y)u+\psi(x,y),$$
 for all $ x+u,y+v \in \widehat{A}$.  Then $\widehat{A}$ defined by the above notations is a Com-PreLie algebra.
 Now it suffices to verify the fundamental identity. For all $ x+u,y+v, z+w \in \widehat{A}$, we have
  \begin{eqnarray*}
 && (x+u)\bullet_{\widehat{A}} \big((y+v) \ast_{\widehat{A}} (z+w)\big)\\
 &=&(x+u)\bullet_{\widehat{A}} (y \ast z+\mu(y)w+\mu(z)v+\phi(y,z) )\\
 &=&x\bullet (y \ast z)+l(x)\mu(y)w+l(x)\mu(z)v+l(x)\phi(y,z)+r(y \ast z)u+\psi(x,y \ast z),
 \end{eqnarray*}
and
 \begin{eqnarray*}
 &&\big((x+u) \bullet_{\widehat{A}} (y+v)\big) \ast_{\widehat{A}} (z+w) + (y+v) \ast_{\widehat{A}} \big((x+u) \bullet_{\widehat{A}} (z+w)\big)\\
   &=&(x\bullet y+l(x)v+r(y)u+\psi(x,y))\ast_{\widehat{A}} (z+w) +(y+v) \ast_{\widehat{A}} (x\bullet z+l(x)w+r(z)u+\psi(x,z))\\
   &=&(x\bullet y)\ast z+\mu(x\bullet y)w+\mu(z)l(x)v+\mu(z)r(y)u+\mu(z)\psi(x,y)+\phi(x\bullet y,z)\\
   &&+ y\ast (x\bullet z)+\mu(y)l(x)w+\mu(y)r(z)u+\mu(y)\psi(x,z)+\mu(x\bullet z)v+\phi(y,x\bullet z)
 \end{eqnarray*}
It follows that
$$ (x+u)\bullet_{\widehat{A}} \big((y+v) \ast_{\widehat{A}} (z+w)\big)=
\big((x+u) \bullet_{\widehat{A}} (y+v)\big) \ast_{\widehat{A}} (z+w) + (y+v) \ast_{\widehat{A}} \big((x+u) \bullet_{\widehat{A}} (z+w)\big)$$
 since $(\phi,\psi)$ be a $2$-cocycle and $V$ is a represention of $A.$
Then by direct computation, we have that $(\widehat{A},\ast_{\widehat{A}} ) $
is a commutative associative algebra and $(\widehat{A},\bullet_{\widehat{A}} )$ is a pre-Lie algebra.

  Moreover,
 \begin{align*}
\mathcal{E}:\xymatrix{
0 \ar[r] & V  \ar[r]^{i}  &  \widehat{A} \ar[r]^{j} & A  \ar[r] & 0
}
\end{align*}
is an abelian extension, where the above maps are given by $i (v) = (0, v)$ and $j (x, v) = x.$
Next, let $(\phi',\psi')$ be another $2$-cocycle cohomologous to $(\phi,\psi)$, then there exists a linear map $f : A \rightarrow V$ satisfying Eqs.~ (\ref{cobound-1})-(\ref{cobound-2}). We define a linear map $F : \widehat{A} \rightarrow \widehat{A}'$  by
\begin{align*}
    F (x+ u) = x+ u + f(x) ,
\end{align*}
for all $x+ u \in \widehat{A}$. We have
\begin{eqnarray*}
&&F\big((x+u)\ast_{\widehat{A}}(y+v)\big)\\
&=&F\big( x\ast y +\mu(x)v+\mu(y)u+\phi(x,y) \big) \\
&=&x\ast y +\mu(x)v+\mu(y)u+\phi(x,y)+f( x\ast y),
\end{eqnarray*}
and
\begin{eqnarray*}
&&F(x+u)\ast_{\widehat{A'}} F(y+v)\\
&=&(x+u + f (x))\ast_{\widehat{A'}}(y+v+f(y))\\
&=&x\ast y+\mu(x)v +\mu(x) f(y)+ \mu(y)u+\mu(y)f(x)+\phi'(x,y).
\end{eqnarray*}
It follows that
$$F\big((x+u)\ast_{\widehat{A}}(y+v)\big)=F(x+u)\ast_{\widehat{A'}} F(y+v).$$

Similarly,
$$F\big((x+u)\bullet_{\widehat{A}}(y+v)\big)=F(x+u)\bullet_{\widehat{A'}} F(y+v).$$

Then the map $F$ defined by the above notations is an isomorphism of the abelian extensions from $\widehat{A}$ to $\widehat{A}'$.

Hence there is a well-defined map  $\Omega :  \mathcal{H}^2 (A ,V) \rightarrow \mathrm{Ext}(A ,V)$. This completes the proof.
\end{proof}
 As a consequence, we obtain the following classification result of abelian extensions.
 \begin{thm}
    Let $A$ be  a Com-PreLie algebra and $V$ be a representation of it. Then
    \begin{align*}
        \mathrm{Ext} (A,V) \cong \mathcal{H}^2 (A,V).
    \end{align*}
\end{thm}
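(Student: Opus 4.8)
The plan is to establish the isomorphism $\mathrm{Ext}(A,V) \cong \mathcal{H}^2(A,V)$ by showing that the two maps $\delta : \mathrm{Ext}(A,V) \to \mathcal{H}^2(A,V)$ and $\Omega : \mathcal{H}^2(A,V) \to \mathrm{Ext}(A,V)$ constructed in the two preceding propositions are mutually inverse bijections. Since $\delta$ and $\Omega$ are already known to be well-defined maps of sets, it suffices to verify the two composition identities $\delta \circ \Omega = \mathrm{id}_{\mathcal{H}^2(A,V)}$ and $\Omega \circ \delta = \mathrm{id}_{\mathrm{Ext}(A,V)}$.

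First I would check $\delta \circ \Omega = \mathrm{id}$. Starting from a $2$-cocycle $(\phi,\psi)$, the map $\Omega$ produces the extension $\widehat{A} = A \oplus V$ with operations $(x+u)\ast_{\widehat{A}}(y+v) = x\ast y + \mu(x)v + \mu(y)u + \phi(x,y)$ and $(x+u)\bullet_{\widehat{A}}(y+v) = x\bullet y + l(x)v + r(y)u + \psi(x,y)$, together with the obvious section $s : A \to \widehat{A}$, $s(x) = x + 0$. Feeding this section into the construction of $\delta$, one computes $s(x)\ast_{\widehat{A}} s(y) - s(x\ast y) = \phi(x,y)$ and $s(x)\bullet_{\widehat{A}} s(y) - s(x\bullet y) = \psi(x,y)$ directly from the definitions, so the cocycle recovered by $\delta$ is exactly $(\phi,\psi)$, hence the same class in $\mathcal{H}^2(A,V)$. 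This direction is essentially immediate.

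Next I would check $\Omega \circ \delta = \mathrm{id}$. Start with an abelian extension $\mathcal{E} : 0 \to V \xrightarrow{i} \widehat{A} \xrightarrow{j} A \to 0$, choose a section $s$, and let $(\phi,\psi)$ be the associated $2$-cocycle. Applying $\Omega$ gives an extension $\widehat{A}_{(\phi,\psi)} = A \oplus V$ with the structure above. To show $\mathcal{E}$ is isomorphic to this one, I would define $F : A \oplus V \to \widehat{A}$ by $F(x+u) = s(x) + i(u)$ (using the vector-space decomposition $\widehat{A} \cong s(A) \oplus i(V)$ coming from the section). One then verifies that $F$ intertwines $\ast_{\widehat{A}_{(\phi,\psi)}}$ with $\ast_{\widehat{A}}$ and $\bullet_{\widehat{A}_{(\phi,\psi)}}$ with $\bullet_{\widehat{A}}$, which follows by expanding $F\big((x+u)\ast_{\widehat{A}_{(\phi,\psi)}}(y+v)\big)$ and $s(x\ast y) + \phi(x,y) = s(x)\ast_{\widehat{A}} s(y)$ together with the definitions $\mu(x)u = s(x)\ast_{\widehat{A}} u$, etc.; similarly for $\bullet$. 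Since $F$ is clearly a linear isomorphism making the ladder diagram commute (it restricts to $\mathrm{id}_V$ on $V$ and induces $\mathrm{id}_A$ on $A$), the two extensions are isomorphic, so $\Omega(\delta[\mathcal{E}]) = [\mathcal{E}]$.

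The main obstacle — and it is a mild one — is bookkeeping rather than conceptual: one must make sure the isomorphism class does not depend on the chosen section (already handled in the proposition defining $\delta$) and that the algebra homomorphism property of $F$ in the $\Omega\circ\delta$ direction uses exactly the defining formulas for $\mu, l, r$ in terms of the section, so that the ``error terms'' $\phi$ and $\psi$ match up on the nose. With those routine verifications in place the two propositions assemble into the stated isomorphism, and the theorem follows.
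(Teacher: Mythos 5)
Your proposal is correct and follows exactly the route the paper intends: the theorem is stated there as an immediate consequence of the two propositions constructing $\delta$ and $\Omega$, and your verification that these maps are mutually inverse (using the canonical section $s(x)=x+0$ for $\delta\circ\Omega$ and the map $F(x+u)=s(x)+i(u)$ for $\Omega\circ\delta$) is precisely the routine check the paper leaves implicit. No gaps; the only points to keep explicit in a write-up are that $i(u)\ast_{\widehat{A}}i(v)=i(u)\bullet_{\widehat{A}}i(v)=0$ since $V$ carries the trivial structure, and that well-definedness on classes is already covered by the two propositions.
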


\section{Inducibility of Com-PreLie automorphisms}\label{sec-4}
In this section, we study the inducibility problem for a pair of Com-PreLie algebra automorphisms in a given abelian extension of a Com-PreLie algebra by a representation. To find an answer to this problem, we define the Wells map in the context of Com-PreLie algebras and show that a pair of Com-PreLie algebra automorphisms is inducible if and only if its image under the Wells map (the image lies in the second cohomology group) vanishes identically.

Let
\begin{eqnarray*}
\mathcal{E}:\xymatrix{
0 \ar[r] & V  \ar[r]^{i}  &  \widehat{A}  \ar[r]^{j} & A  \ar[r] & 0
}
\end{eqnarray*}
    be an abelian extension of the Com-PreLie algebra $A$ by a representation $(V, \mu, l,r)$.

 Let $\mathrm{Aut}_{V} (\widehat{A})$ be the group of all Com-PreLie algebra automorphisms $\gamma \in \mathrm{Aut} (\widehat{A})$ that satisfies $\gamma ({V}) \subset V$. Hence, for any $\gamma \in \mathrm{Aut}_{V} (\widehat{A})$ we naturally have $\gamma \big|_{V} \in \mathrm{Aut} (V)$ a Com-PreLie algebra automorphism. Next, for any linear section $s: A \rightarrow \widehat{A}$ of the map $j$ (i.e., $j \circ s = \mathrm{id}_{A}$), we can also define a map $\overline{\gamma} : A \rightarrow A$ by $\overline{\gamma} (x) := j \gamma s (x)$, for $x \in A$. It can be easily checked that the map $\overline{\gamma}$ doesn't depend on the choice of the section $s$. Moreover, $\overline{\gamma}$ is a bijection on the set $j$. Let $( \phi, \psi)$ be the Com-PreLie $2$-cocycle corresponding to the given abelian extension and induced by the section $s$. Then
\begin{align*}
    \phi (x, y) := s(x) \ast_{\widehat{A}} s(y) - s (x \ast y) \quad  \text{ and } \quad \psi (x, y): = s(x) \bullet_{\widehat{A}} s(y)- s(x\bullet y ), \text{ for } x, y \in A.
\end{align*}
Hence we get
\begin{align*}
    \overline{\gamma} (x \ast y) = j \gamma ( s (x \ast y)) =~& j \gamma \big(  s(x) \ast_{\widehat{A}} s (y) - \phi (x, y) \big) \\
    =~& j \gamma \big(  s(x) \ast_{\widehat{A}} s (y)  \big) \quad (\because ~ \gamma (V) \subset V ~~ \text{ and }~~ j \big|_{V} = 0) \\
   =~& j \gamma s (x) \ast j \gamma s (y) = \overline{\gamma} (x) \ast \overline{\gamma} (y).
\end{align*}
In the same way, we can show that $\overline{\gamma}  (x \bullet y) =  \overline{\gamma} (x)\bullet \overline{\gamma} (y) $, for all $x, y \in A$. Thus, $\overline{\gamma} : A \rightarrow A$ is a Com-PreLie algebra automorphism, i.e., $\overline{\gamma} \in \mathrm{Aut} (A)$. Hence, we obtain a group homomorphism
\begin{align*}
    \tau : \mathrm{Aut}_{V} (\widehat{A}) \rightarrow \mathrm{Aut} (V) \times \mathrm{Aut} (A) ~~ \text{ given by }~~ \tau (\gamma) := (\gamma \big|_{V}, \overline{\gamma}).
\end{align*}
Keeping this in mind, we say that a pair of Com-PreLie algebra automorphisms $(\beta, \alpha) \in \mathrm{Aut} (V) \times \mathrm{Aut} (A)$ is {\bf inducible} if this pair lies in the image of the map $\tau$. Equivalently, $(\beta, \alpha)$ is inducible if there exists a Com-PreLie algebra automorphism $\gamma \in \mathrm{Aut}_{V} (\widehat{A})$ such that $\gamma \big|_{V} = \beta$ and $\overline{\gamma} = \alpha$.

Our aim in this section is to find a necessary and sufficient condition (independent of the choice of any section) for a pair of Com-PreLie algebra automorphisms in $\mathrm{Aut} (V) \times \mathrm{Aut} (A)$ to be inducible. We will start with the following result.

\begin{pro}\label{Idu}
Let (\ref{ab-ext-1}) be an abelian extension of the Com-PreLie algebra $A$ by a representation $(V, \mu,l, r)$. The pair $(\beta, \alpha) \in \mathrm{Aut} (V) \times \mathrm{Aut} (A)$ of Com-PreLie algebra automorphisms is inducible if and only if there is a
linear map $\varphi:A\longrightarrow V$ satisfying the following conditions:
\begin{eqnarray}
  \label{IamA1-1}  \beta (\mu(x)u ) = \mu(\alpha (x)) \beta (u),
\beta (l(x) u) = l(\alpha (x)) \beta (u),
\beta (r(x) u) = r(\alpha (x)) \beta (u), \\
   \label{IamA1-2}\beta\big(\phi(x,y)\big)-\phi(\alpha(x),\alpha(y)) = \mu(\alpha(x))\varphi(y) -\varphi(x\ast y)+\mu(\alpha(y))\varphi(x) , \\
  \label{IamA1-3} \beta\big(\psi(x,y)\big)-\psi(\alpha(x),\alpha(y)) = l(\alpha(x))\varphi(y) -\varphi(x\bullet y)+r(\alpha(y))\varphi(x) ,
\end{eqnarray}
for all $x\in A, u\in V$.
\end{pro}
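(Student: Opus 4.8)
The plan is to prove both directions of the equivalence by transporting structure along a section, exactly as in the analogous Lie-algebra and pre-Lie arguments.

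\medskip

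\textbf{Necessity.} Assume $(\beta,\alpha)$ is inducible, so there is $\gamma\in\Aut_V(\widehat A)$ with $\gamma|_V=\beta$ and $\overline\gamma=\alpha$. First I would derive \eqref{IamA1-1}: for $x\in A$ and $u\in V$ compute $\beta(\mu(x)u)=\gamma(s(x)\ast_{\widehat A}u)=\gamma(s(x))\ast_{\widehat A}\gamma(u)$. Now write $\gamma(s(x))=s(\overline\gamma(x))+\eta(x)=s(\alpha(x))+\eta(x)$ where $\eta(x):=\gamma(s(x))-s(\alpha(x))\in V$ (it lies in $V$ because $j(\gamma(s(x)))=\overline\gamma(x)=\alpha(x)=j(s(\alpha(x)))$). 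Since $V$ is an abelian ideal, the term $\eta(x)\ast_{\widehat A}\beta(u)$ vanishes, leaving $\beta(\mu(x)u)=s(\alpha(x))\ast_{\widehat A}\beta(u)=\mu(\alpha(x))\beta(u)$; the $l$ and $r$ identities follow the same way from $\bullet_{\widehat A}$. Then I would define $\varphi:=\eta$, i.e. $\varphi(x)=\gamma(s(x))-s(\alpha(x))$, and verify \eqref{IamA1-2}--\eqref{IamA1-3}. Applying $\gamma$ to $s(x)\ast_{\widehat A}s(y)=s(x\ast y)+\phi(x,y)$ and using $\gamma$ multiplicative gives $\gamma(s(x))\ast_{\widehat A}\gamma(s(y))=\gamma(s(x\ast y))+\beta(\phi(x,y))$. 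Expanding the left side with $\gamma(s(x))=s(\alpha(x))+\varphi(x)$ (and dropping the $\varphi(x)\ast_{\widehat A}\varphi(y)$ term, which is zero) yields $s(\alpha(x)\ast\alpha(y))+\phi(\alpha(x),\alpha(y))+\mu(\alpha(x))\varphi(y)+\mu(\alpha(y))\varphi(x)$, while the right side is $s(\alpha(x\ast y))+\varphi(x\ast y)+\beta(\phi(x,y))$. Since $\alpha$ is an automorphism, $\alpha(x)\ast\alpha(y)=\alpha(x\ast y)$, so the $s(\cdots)$ terms cancel and we get exactly \eqref{IamA1-2}; the $\bullet$ computation gives \eqref{IamA1-3}.

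\medskip

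\textbf{Sufficiency.} Conversely, given a linear $\varphi:A\to V$ satisfying \eqref{IamA1-1}--\eqref{IamA1-3}, I would define $\gamma:\widehat A\to\widehat A$ by
\[
\gamma(s(x)+u):=s(\alpha(x))+\varphi(x)+\beta(u),\qquad x\in A,\ u\in V,
\]
which makes sense because every element of $\widehat A$ is uniquely $s(x)+u$ with $x=j(\text{elt})$. It is linear, $\gamma|_V=\beta$, and $j\gamma s=\alpha$ so $\overline\gamma=\alpha$. Bijectivity follows since $\alpha,\beta$ are bijective (an explicit inverse is $s(y)+v\mapsto s(\alpha^{-1}(y))-\beta^{-1}\varphi(\alpha^{-1}(y))+\beta^{-1}(v)$). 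The substantive point is that $\gamma$ is multiplicative for both $\ast_{\widehat A}$ and $\bullet_{\widehat A}$: expanding $\gamma\big((s(x)+u)\ast_{\widehat A}(s(y)+v)\big)$ and $\gamma(s(x)+u)\ast_{\widehat A}\gamma(s(y)+v)$ using the semidirect-type formulas with $\phi$, the equality reduces precisely to \eqref{IamA1-1} (to move $\beta$ past $\mu,l,r$) together with \eqref{IamA1-2} (resp. \eqref{IamA1-3} for $\bullet$). Hence $\gamma\in\Aut_V(\widehat A)$ with $\tau(\gamma)=(\beta,\alpha)$, so $(\beta,\alpha)$ is inducible.

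\medskip

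\textbf{Main obstacle.} No step is conceptually hard, but the bookkeeping in the sufficiency direction is the delicate part: one must carefully expand both sides of the multiplicativity equations using the abelian-extension product formulas $(s(x)+u)\ast_{\widehat A}(s(y)+v)=s(x\ast y)+\phi(x,y)+\mu(x)v+\mu(y)u$ (and the $\bullet$-analogue), track which cross-terms land in $V$ versus in the $s(A)$-part, and match them against \eqref{IamA1-2}--\eqref{IamA1-3}; it is also worth remarking that the computation only uses the cochain/cocycle identities implicitly, not that $(\beta,\alpha)$ preserves the representation beyond \eqref{IamA1-1}. I would also note explicitly that all constructions are independent of the chosen section $s$, since changing $s$ changes $\phi,\psi$ and $\varphi$ in a compatible (coboundary) way.
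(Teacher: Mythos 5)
Your proposal is correct and follows essentially the same route as the paper: in the necessity direction you set $\varphi(x)=\gamma s(x)-s(\alpha(x))$ and expand the multiplicativity of $\gamma$ using the triviality of products of elements of $V$, and in the sufficiency direction you define $\gamma(s(x)+u)=s(\alpha(x))+\varphi(x)+\beta(u)$, check bijectivity via the same explicit inverse, and reduce multiplicativity to conditions (\ref{IamA1-1})--(\ref{IamA1-3}), exactly as in the paper's proof.
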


\begin{proof}
    Since $(\beta, \alpha)$ is inducible, there exists a Com-PreLie algebra automorphism $\gamma \in \mathrm{Aut}_V (\widehat{A})$ such that $\gamma |_V = \beta$ and $\overline{\gamma} = \alpha$. Since $s$ is a section of $j$,
for all
$x\in A$, we have
$$j(\gamma s-s\alpha)(x)=\alpha(x)-\alpha(a)=0$$
which implies that $(\gamma s-s\alpha)(x)\in \mathrm{ker}(j)=V.$ So we can define linear maps $\varphi:A\longrightarrow
V$ by
\begin{equation} \varphi(x)=(\gamma s-s\alpha)(x),~~\forall~x\in A.\end{equation}
Then for any $a \in A, u \in V$, by the fact that $V$ has trivial Com-PreLie structure, we get
    \begin{align*}
         &\beta (\mu(x)u ) = \gamma (s(x) \ast_{\widehat{A}} u) = \gamma s (x) \ast_{\widehat{A}} \gamma (u)         = s \alpha (x) \ast_{\widehat{A}} \beta (u)  =  \mu(\alpha (x)) \beta (u), \\
        & \beta (l(x) u) = \gamma ( s(x) \bullet_{\widehat{A}} u ) = \gamma s (x) \bullet_{\widehat{A}} \gamma (u)   = s \alpha (x)\bullet_{\widehat{A}} \beta (u)  = l(\alpha (x)) \beta (u),\\
         & \beta (r(x) u) = \gamma (u \bullet_{\widehat{A}}  s(x) ) =\gamma (u) \bullet_{\widehat{A}}\gamma s (x)     = \beta (u) \bullet_{\widehat{A}} s \alpha (x) = r(\alpha (x)) \beta (u),
    \end{align*}
    and
    \begin{eqnarray*}
&&\beta\big(\phi(x,y)\big)-\phi(\alpha(x),\alpha(y)) \\
  &=& \gamma\big(s(x) \ast_{\widehat{A}} s(y) - s (x \ast y)\big)-\big(s(\alpha(x)) \ast_{\widehat{A}} s(\alpha(y)) - s (\alpha(x) \ast \alpha(y))\big)\\
  &=&\gamma s(x) \ast_{\widehat{A}} \gamma s(y)-\gamma  s (x \ast y)+s (\alpha(x) \ast \alpha(y)-s(\alpha(x)) \ast_{\widehat{A}} s(\alpha(y))\\
  &=& \varphi(x)\ast_{\widehat{A}} \varphi(y)+s(\alpha(x)) \ast_{\widehat{A}} \varphi(y)-\varphi(x\ast y)+s(\alpha(y)) \ast_{\widehat{A}} \varphi(x)\\
  &=&\mu(\alpha(x))\varphi(y) -\varphi(x\ast y)+\mu(\alpha(y))\varphi(x),
\end{eqnarray*}
which implies that Eq.~(\ref{IamA1-2}) holds.
Analogously, we can show that Eq.~(\ref{IamA1-3}) holds.

Conversely, suppose that $(\beta, \alpha) \in \mathrm{Aut} (V) \times \mathrm{Aut} (A)$ and there are a linear map $\varphi:A\longrightarrow V$ satisfying Eqs. (\ref{IamA1-1})-(\ref{IamA1-3}). Since $s$ is a section of $j$,
all $\widehat{x}\in \widehat{A}$ can be written as
$\widehat{x}= s(x)+u$ for some $x\in A,u\in V.$
Define linear maps $\gamma:\widehat{A}\longrightarrow
\widehat{A}$ by
\begin{equation} \label{Aut1} \gamma(\widehat{x})=\gamma( s(x)+u)= s(\alpha(x))+\beta(u)+\varphi(x).\end{equation}
The map is injective as $\gamma (\widehat{x}) = 0$ (for $\widehat{x} = s(x) + u$) implies that $s (\alpha (x)) = 0$ and $\beta (u) + \varphi (x) = 0$. Since $s, \alpha, \beta$ are all injective maps, we get that $x= 0$ and $u = 0$ which in turn implies that $\widehat{x} = 0$. The map $\gamma$ is also surjective as $\widehat{x} = s(x) + u \in \widehat{A}$ has unique preimage $\widehat{x}' = s (\alpha^{-1} (x)) + ( \beta^{-1} (u) - \beta^{-1} \varphi \alpha^{-1} (x)) \in \widehat{A}$. This shows that $\gamma$ is indeed a bijective map.

 Next, we claim that $\gamma: \widehat{A} \rightarrow \widehat{A}$ is a Com-PreLie algebra automorphism. For any two elements $ \widehat{x}= s(x) + u$ and $\widehat{y} = s(y) + v$ from the space $\widehat{A}$, we observe that
    \begin{eqnarray*}
        &&\gamma (\widehat{x}) \ast_{\widehat{A}} \gamma (\widehat{y}) \\
        &=& \big(  s (\alpha (x)) + \beta (u) + \varphi (x) \big) \ast_{\widehat{A}} \big(  s (\alpha (y)) + \beta (v) + \varphi (y)   \big) \\
       & =& s (\alpha (x)) \ast_{\widehat{A}} s (\alpha (y)) + s (\alpha (x)) \ast_{\widehat{A}} \beta (v) + s (\alpha (x)) \ast_{\widehat{A}} \varphi(y) + s (\alpha (y)) \ast_{\widehat{A}} \beta (u) + s (\alpha (y)) \ast_{\widehat{A}} \varphi (x) \\
       & =& s (\alpha (x) \ast \alpha (y)) + \phi (\alpha (x), \alpha (y)) + \mu({\alpha (x)}) \beta (v) + \mu({\alpha (x)}) \varphi (y) + \mu({\alpha (y)} ) \beta (u) + \mu({\alpha (y)} )\varphi (x) \\
        &=& s (\alpha (x) \ast \alpha (y)) + \beta \big(   \phi (x, y) + \mu(x) v + \mu(y) u \big) + \varphi (x \ast y) \qquad (\text{by } (\ref{IamA1-1}) \text{ and } (\ref{IamA1-2})) \\
       & =&\gamma \big(  s (x \ast y) + \phi (x, y) + s(x) \ast_{\widehat{A}} v + s(y) \ast_{\widehat{A}} u  \big) \qquad (\text{from the definition of } \gamma)\\
       & =& \gamma \big(  s(x) \ast_{\widehat{A}} s (y) + s (x) \ast_{\widehat{A}} v + s(y) \ast_{\widehat{A}} u  \big) \\
       & =& \gamma (( s(x) + u ) \ast_{\widehat{A}} (s(y) + v) ) = \gamma (\widehat{x} \ast_{\widehat{A}} \widehat{y}).
    \end{eqnarray*}
 In the same way, one can show that $ \gamma (\widehat{x}) \bullet_{\widehat{A}} \gamma (\widehat{y}) = \gamma (\widehat{x}\bullet_{\widehat{A}} \widehat{y} ).$ Hence the claim follows. Finally, it follows from the definition of $\gamma$ that $\gamma (u) = \beta (u)$ and $\overline{\gamma} (x) = j \gamma s (x) = j ( s (\alpha (x)) + \varphi (x) )= \alpha (x)$, for all $x \in A$ and $u \in V$. Hence $\gamma \in \mathrm{Aut}_V (A)$ and $\tau (\gamma) = (\gamma|_V, \overline{\gamma}) = (\beta, \alpha)$ which shows that the pair $(\beta, \alpha)$ is inducible. This completes the proof.
\end{proof}

Assume that (\ref{ab-ext-1})
be an abelian extension of $A$ by $V $
   with a section $s$ of $j$ and
$(\phi, \psi)$ be the corresponding abelian 2-cocycle induced by
$s$. Define
\begin{align*}
		\mathcal{C}=&\left\{\begin{aligned}&(\beta, \alpha)\in \mathrm{Aut}(V
)\times \mathrm{Aut}(A)
\end{aligned}\left|
\begin{aligned}& Eqs.~(\ref{IamA1-1})~ hold
     \end{aligned}\right.\right\}.
 	\end{align*}
 to be the set of all compatible pairs of automorphisms. Then $\mathcal{C}$ is obviously a subgroup of $ \mathrm{Aut}(V
)\times \mathrm{Aut}(A)$. For all $(\beta, \alpha)\in \mathrm{Aut}(V
)\times \mathrm{Aut}(A)$,
define a symmetric bilinear map $ \phi \in \mathrm {Hom} (A\otimes A,V) $  and a bilinear map $ \psi \in \mathrm {Hom} (A\otimes A,V) $  respectively by
 \begin{align}
 \phi_{(\beta, \alpha)}(x,y)=\beta \phi(\alpha^{-1}(x),\alpha^{-1}(y)),\\
 \psi_{(\beta, \alpha)}(x,y)=\beta \psi(\alpha^{-1}(x),\alpha^{-1}(y)),
\end{align}
for all $ x,y\in A$.

We denote $\big( \phi_{(\beta, \alpha)}, \psi_{(\beta, \alpha)} \big)$
by $(\phi,\psi)^{(\beta, \alpha)}$ for simplicity.
In general, $(\phi,\psi)^{(\beta, \alpha)}$ may not be a 2-cocycle.
In fact, $(\phi,\psi)^{(\beta, \alpha)}$ is a 2-cocycle if $(\beta, \alpha)\in \mathcal{C}$.

\begin{pro} With the above notations, if $(\beta, \alpha)\in \mathcal{C}$, then $(\phi,\psi)^{(\beta, \alpha)}$ is an abelian 2-cocycle.
\end{pro}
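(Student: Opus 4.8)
The statement asserts that when $(\beta,\alpha)$ lies in the compatible subgroup $\mathcal{C}$—that is, $\beta$ intertwines the structure maps $\mu,l,r$ with respect to $\alpha$ as in Eqs.~(\ref{IamA1-1})—then the transported pair $(\phi,\psi)^{(\beta,\alpha)}$ again satisfies the $2$-cocycle identities (\ref{A2co-1})--(\ref{A2co-3}). The natural approach is a direct verification: substitute $\phi_{(\beta,\alpha)}(x,y)=\beta\phi(\alpha^{-1}(x),\alpha^{-1}(y))$ and $\psi_{(\beta,\alpha)}(x,y)=\beta\psi(\alpha^{-1}(x),\alpha^{-1}(y))$ into each of the three cocycle equations, use that $\alpha$ is an algebra automorphism to rewrite terms like $\alpha^{-1}(x\ast y)=\alpha^{-1}(x)\ast\alpha^{-1}(y)$ and $\alpha^{-1}(x\bullet y)=\alpha^{-1}(x)\bullet\alpha^{-1}(y)$, push $\beta$ through the module actions using (\ref{IamA1-1}), and then recognize the result as $\beta$ applied to the original cocycle identity evaluated at the arguments $\alpha^{-1}(x),\alpha^{-1}(y),\alpha^{-1}(z)$, which vanishes because $(\phi,\psi)$ is a $2$-cocycle.

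\medskip
\noindent
First I would treat the Harrison part, Eq.~(\ref{A2co-1}). Writing $x'=\alpha^{-1}(x)$, $y'=\alpha^{-1}(y)$, $z'=\alpha^{-1}(z)$, the left-hand side becomes
$\beta\phi(x',y'\ast z')+\mu(\alpha(x'))\beta\phi(y',z')-\beta\phi(x'\ast y',z')-\mu(\alpha(z'))\beta\phi(x',y')$.
Applying the first relation in (\ref{IamA1-1}) to the two terms containing $\mu$, this equals $\beta$ of $\phi(x',y'\ast z')+\mu(x')\phi(y',z')-\phi(x'\ast y',z')-\mu(z')\phi(x',y')$, which is $\beta(0)=0$ since $(\phi,\psi)$ satisfies (\ref{A2co-1}). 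The pre-Lie identity (\ref{A2co-2}) is handled identically using the second and third relations of (\ref{IamA1-1}) for the $l$- and $r$-actions, together with $\alpha^{-1}([x,y])=[\alpha^{-1}(x),\alpha^{-1}(y)]$ which holds because $\alpha$ respects $\bullet$. The mixed identity (\ref{A2co-3}) is the most involved, as it couples $\phi$ and $\psi$ and uses both $\mu$ and $l$, but the same mechanism applies term by term.

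\medskip
\noindent
One should also record, for completeness, that $\phi_{(\beta,\alpha)}$ is symmetric—immediate from symmetry of $\phi$—so that $(\phi,\psi)^{(\beta,\alpha)}$ is a genuine element of $C^2(A,V)$ of the required type. The main obstacle is purely bookkeeping: Eq.~(\ref{A2co-3}) has six terms and one must be careful to match the arguments after the substitution $x\mapsto\alpha^{-1}(x)$ so that every term of the transformed expression is $\beta$ applied to the corresponding term of the original identity; no genuinely new idea is needed beyond the compatibility conditions (\ref{IamA1-1}) and the fact that $\alpha$ is a Com-PreLie automorphism. I would present the argument for (\ref{A2co-1}) in full and remark that (\ref{A2co-2}) and (\ref{A2co-3}) follow by the same substitution-and-intertwining computation.
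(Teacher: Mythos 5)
Your proposal is correct and follows essentially the same route as the paper: substitute $\alpha^{-1}(x),\alpha^{-1}(y),\alpha^{-1}(z)$, use that $\alpha$ is a Com-PreLie automorphism and push $\beta$ through the actions via the compatibility conditions (\ref{IamA1-1}), thereby reducing each identity for $(\phi,\psi)^{(\beta,\alpha)}$ to $\beta$ applied to the corresponding identity for $(\phi,\psi)$. The only cosmetic difference is that the paper writes out the mixed identity (\ref{A2co-3}) in detail and leaves (\ref{A2co-1})--(\ref{A2co-2}) to the reader, whereas you do the Harrison identity in full.
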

\begin{proof}
  Since $(\beta, \alpha)\in \mathcal{C}$, for any $x,y\in A$, we have the Eqs.~ (\ref{IamA1-1}) hold.
Given that $(\phi,\psi)$ is a 2-cocycle, the identities (\ref{A2co-1})-(\ref{A2co-3}) are hold.
In identities (\ref{A2co-3}), if we replace $x,y,z$ by $\alpha^{-1}(x),\alpha^{-1}(y), \alpha^{-1}(z)$ respectively, we obtain
 \begin{eqnarray*}
 \beta\big(\psi(\alpha^{-1}(x), \alpha^{-1}(y) \ast \alpha^{-1}(z))+l(\alpha^{-1}(x))\phi(\alpha^{-1}(y),\alpha^{-1}(z))\\
-\phi(\alpha^{-1}(x) \bullet \alpha^{-1}(y),\alpha^{-1}(z))-\mu(\alpha^{-1}(z))\psi(\alpha^{-1}(x),\alpha^{-1}(y))\notag\\
-\phi(\alpha^{-1}(y),\alpha^{-1}(x) \bullet \alpha^{-1}(z))-\mu(\alpha^{-1}(y))\psi(\alpha^{-1}(x),\alpha^{-1}(z))\big)=0
 \end{eqnarray*}
   which can be written as
  \begin{eqnarray*}
\psi_{(\beta, \alpha)}(x, y \ast z)+l(x)\phi_{(\beta, \alpha)}(y,z)-\phi_{(\beta, \alpha)}(x \bullet y,z)-\mu(z) \psi_{(\beta, \alpha)}(x,y)\notag\\
-\phi_{(\beta, \alpha)}(y,x \bullet z)-\mu(y) \psi_{(\beta, \alpha)}(x,z)=0.
  \end{eqnarray*}
  which implies that Eq.~(\ref{A2co-3}) holds for $(\phi,\psi)^{(\beta, \alpha)}$. Similarly, we can check that
Eq.~(\ref{A2co-1}) and Eq.~(\ref{A2co-2}) hold. This finishes the proof.
\end{proof}

Therefore, an equivalent formulation of the Theorem \ref{Idu} is presented below.

\begin{thm}\label{CIdu}
 A pair $ (\beta, \alpha)$ is inducible if and only if $(\beta, \alpha)\in \mathcal{C}$ and the two
 abelian 2-cocycles  $(\phi,\psi)^{(\beta, \alpha)}$ and  $(\phi,\psi)$
 are equivalent.
\end{thm}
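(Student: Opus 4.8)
The plan is to derive Theorem \ref{CIdu} directly from Proposition \ref{Idu} by translating the three conditions (\ref{IamA1-1})--(\ref{IamA1-3}) on the auxiliary map $\varphi$ into the language of the twisted cocycle $(\phi,\psi)^{(\beta,\alpha)}$. The first observation is that condition (\ref{IamA1-1}) is, by definition, precisely the statement that $(\beta,\alpha)\in\mathcal{C}$; hence in both directions that part of the equivalence is automatic, and the real work is to match (\ref{IamA1-2})--(\ref{IamA1-3}) with the notion of cohomologous $2$-cocycles recorded in (\ref{cobound-1})--(\ref{cobound-2}). Note also that once $(\beta,\alpha)\in\mathcal{C}$, the pair $(\phi,\psi)^{(\beta,\alpha)}$ is genuinely a $2$-cocycle by the proposition immediately preceding the theorem, so ``equivalent'' makes sense.

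For the forward direction, suppose $(\beta,\alpha)$ is inducible. By Proposition \ref{Idu} there is a linear map $\varphi:A\to V$ satisfying (\ref{IamA1-1})--(\ref{IamA1-3}); in particular $(\beta,\alpha)\in\mathcal{C}$. Set $f:=\varphi\circ\alpha^{-1}:A\to V$. In (\ref{IamA1-2}) substitute $x\mapsto\alpha^{-1}(x)$, $y\mapsto\alpha^{-1}(y)$; using that $\alpha^{-1}$ is a Com-PreLie homomorphism, so $\alpha^{-1}(x)\ast\alpha^{-1}(y)=\alpha^{-1}(x\ast y)$, the left-hand side becomes $\phi_{(\beta,\alpha)}(x,y)-\phi(x,y)$ and the right-hand side becomes $\mu(x)f(y)-f(x\ast y)+\mu(y)f(x)$, which is exactly (\ref{cobound-1}) for the pair $\phi_{(\beta,\alpha)},\phi$. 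The same substitution in (\ref{IamA1-3}), together with $\alpha^{-1}(x)\bullet\alpha^{-1}(y)=\alpha^{-1}(x\bullet y)$, yields (\ref{cobound-2}). Hence $(\phi,\psi)^{(\beta,\alpha)}$ and $(\phi,\psi)$ are equivalent.

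For the converse, assume $(\beta,\alpha)\in\mathcal{C}$ and that $(\phi,\psi)^{(\beta,\alpha)}$ and $(\phi,\psi)$ are equivalent via a linear map $f:A\to V$ as in (\ref{cobound-1})--(\ref{cobound-2}). Since $(\beta,\alpha)\in\mathcal{C}$, condition (\ref{IamA1-1}) holds. Put $\varphi:=f\circ\alpha:A\to V$ and reverse the previous computation: substituting $x\mapsto\alpha(x)$, $y\mapsto\alpha(y)$ in (\ref{cobound-1}) and rewriting $\phi_{(\beta,\alpha)}(\alpha(x),\alpha(y))=\beta\phi(x,y)$ and $f(\alpha(x)\ast\alpha(y))=f(\alpha(x\ast y))=\varphi(x\ast y)$ recovers (\ref{IamA1-2}); likewise (\ref{cobound-2}) gives (\ref{IamA1-3}). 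Thus $\varphi$ satisfies all of (\ref{IamA1-1})--(\ref{IamA1-3}), and Proposition \ref{Idu} shows $(\beta,\alpha)$ is inducible.

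I expect no serious obstacle here: the argument is pure bookkeeping, and the only point requiring care is consistently tracking where $\alpha$ versus $\alpha^{-1}$ is applied and invoking the homomorphism property of $\alpha$ for \emph{both} products $\ast$ and $\bullet$ when passing $f$ through $\alpha^{\pm1}$. The structural input — that the twist of a $2$-cocycle by a compatible pair is again a $2$-cocycle — has already been supplied.
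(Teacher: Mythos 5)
Your proposal is correct and follows essentially the same route as the paper: both directions reduce to Proposition \ref{Idu} by the substitutions $x,y\mapsto\alpha^{\mp1}(x),\alpha^{\mp1}(y)$, with the coboundary map taken as $f=\varphi\alpha^{-1}$ in one direction and $\varphi=f\alpha$ in the other, using that $\alpha$ is a homomorphism for both $\ast$ and $\bullet$. Your explicit remark that $(\beta,\alpha)\in\mathcal{C}$ is needed for $(\phi,\psi)^{(\beta,\alpha)}$ to be a $2$-cocycle is a point the paper handles in the proposition just before the theorem, so nothing is missing.
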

\begin{proof}
Let $(\beta, \alpha)$ is inducible. It is obviously that $(\beta, \alpha)\in \mathcal{C}$. Then by Theorem \ref{Idu}, there exists linear maps $\varphi:A\longrightarrow V$
satisfying Eqs.~ (\ref{IamA1-2})-(\ref{IamA1-3}).
 We replace $ x, y$ by $\alpha^{-1}(x),\alpha^{-1}(y)$ respectively, we get
 \begin{eqnarray*}
\phi_{(\beta, \alpha)}(x,y)-\phi(x,y) = \mu(x)\varphi\alpha^{-1}(y) -\varphi\alpha^{-1}(x\ast y)+\mu(y)\varphi(\alpha^{-1}(x)) , \\
\psi_{(\beta, \alpha)}(x,y)-\psi(x,y) = l(x)\varphi\alpha^{-1}(y) -\varphi\alpha^{-1}(x\bullet y)+r(y)\varphi(\alpha^{-1}(x)) .
 \end{eqnarray*}
 This shows that  $(\phi,\psi)^{(\beta, \alpha)}$ and  $(\phi,\psi)$ are equivalent and the equivalence is given by the map $\varphi \alpha^{-1}  \in \mathrm{Hom}(A,V).$

  Conversely, suppose that $(\phi,\psi)^{(\beta, \alpha)}$ and  $(\phi,\psi)$ are equivalent and the equivalence is given by the map $f \in \mathrm{Hom}(A,V),$ i.e.
  \begin{eqnarray}
   \label{cobound-11}\beta \phi(\alpha^{-1}(x),\alpha^{-1}(y)) - \phi (x, y) = \mu(x) f (y)- f (x \ast y ) + \mu(y) f (x), \\
   \label{cobound-21}\beta \psi(\alpha^{-1}(x),\alpha^{-1}(y)) - \psi (x, y) =l(x)f (y) - f (x\bullet y ) + r(y) f (x),
\end{eqnarray}
for all $x,y\in A$. We now define a map $\varphi:=f \alpha:A\longrightarrow V$,
 then by replace $ x, y$ by $\alpha(x),\alpha(y)$ in Eq.~(\ref{cobound-11}) and Eq.~(\ref{cobound-21})  respectively, it can be easily checked that the map $\varphi$  satisfies the Eqs.~ (\ref{IamA1-2})-(\ref{IamA1-3}). Thus the pair $(\beta, \alpha)$ is inducible.
 \end{proof}
\section{Wells exact sequences}\label{sec-5}
In this section, we consider the Wells map associated with abelian extensions of a Com-PreLie algebra.

 Let (\ref{ab-ext-1})
be an abelian extension of $A$ by $V$
   with a section $s$ of $j$ and
$(\phi,\psi)$ be the corresponding abelian 2-cocycle induced by $s$.

Define a  map $\mathcal {W}:\mathcal{C}\longrightarrow \mathcal{H}^2 (A ,V)$ by
\begin{equation}\label{W1}
	\mathcal {W}(\beta,\alpha)
=[(\phi,\psi)^{(\beta,\alpha)}
-(\phi,\psi)].
\end{equation}
The map $\mathcal {W}$ is called the Wells map associated with $\mathcal{E}$.

\begin{pro}
 The Wells map  $\mathcal {W}$  does not depend on the choice of section.
\end{pro}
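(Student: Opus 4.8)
The plan is to compute the Wells class for two different sections of $j$ and verify it is unchanged. Fix linear sections $s, s'$ of $j$ with corresponding abelian $2$-cocycles $(\phi,\psi)$ and $(\phi',\psi')$. Since $j(s(x) - s'(x)) = x - x = 0$, the rule $\lambda(x) := s(x) - s'(x)$ defines a linear map $\lambda : A \to V$, and the two sections are related by $s = s' + \lambda$.

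First I would show $(\phi,\psi)$ and $(\phi',\psi')$ are cohomologous, with $\lambda$ realizing the equivalence. Substituting $s = s' + \lambda$ into $\phi(x,y) = s(x)\ast_{\widehat{A}} s(y) - s(x\ast y)$ and using that $V$ is the trivial Com-PreLie algebra (so $\lambda(x)\ast_{\widehat{A}} \lambda(y) = 0$) together with $s'(x)\ast_{\widehat{A}} v = \mu(x) v$ (the representation being independent of the section, by Proposition \ref{rep-pro}), one gets $\phi(x,y) - \phi'(x,y) = \mu(x)\lambda(y) - \lambda(x\ast y) + \mu(y)\lambda(x)$, and similarly $\psi(x,y) - \psi'(x,y) = l(x)\lambda(y) - \lambda(x\bullet y) + r(y)\lambda(x)$; these are exactly Eqs.~(\ref{cobound-1})--(\ref{cobound-2}) with $f = \lambda$, so $(\phi,\psi) - (\phi',\psi') = \delta^1(\lambda)$.

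Next I would show that applying the operation $(\beta,\alpha)$ preserves this relationship. For $(\beta,\alpha) \in \mathcal{C}$, $\phi_{(\beta,\alpha)}(x,y) - \phi'_{(\beta,\alpha)}(x,y) = \beta\big((\phi - \phi')(\alpha^{-1}(x),\alpha^{-1}(y))\big)$. Inserting the coboundary formula above, using that $\alpha^{-1} \in \mathrm{Aut}(A)$ (so $\alpha^{-1}(x)\ast \alpha^{-1}(y) = \alpha^{-1}(x\ast y)$, and likewise for $\bullet$) and the compatibility conditions (\ref{IamA1-1}) (which yield $\beta\mu(\alpha^{-1}(x)) = \mu(x)\beta$, $\beta l(\alpha^{-1}(x)) = l(x)\beta$, $\beta r(\alpha^{-1}(x)) = r(x)\beta$), I obtain $\phi_{(\beta,\alpha)}(x,y) - \phi'_{(\beta,\alpha)}(x,y) = \mu(x)g(y) - g(x\ast y) + \mu(y)g(x)$ and $\psi_{(\beta,\alpha)}(x,y) - \psi'_{(\beta,\alpha)}(x,y) = l(x)g(y) - g(x\bullet y) + r(y)g(x)$, where $g := \beta\lambda\alpha^{-1}$. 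Hence $(\phi,\psi)^{(\beta,\alpha)} - (\phi',\psi')^{(\beta,\alpha)} = \delta^1(g)$.

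Combining the two steps, the difference between the two values of the Wells map is
$$\big[(\phi,\psi)^{(\beta,\alpha)} - (\phi,\psi)\big] - \big[(\phi',\psi')^{(\beta,\alpha)} - (\phi',\psi')\big] = \big[\delta^1(g) - \delta^1(\lambda)\big] = \big[\delta^1(g-\lambda)\big] = 0$$
in $\mathcal{H}^2(A,V)$, which proves the independence. The only delicate point is the second step: one needs that conjugating a $2$-coboundary by a \emph{compatible} pair $(\beta,\alpha)$ again yields a $2$-coboundary, and this is exactly where the hypothesis $(\beta,\alpha)\in\mathcal{C}$ (Eqs.~(\ref{IamA1-1})) enters essentially --- recall that for a general pair, $(\phi,\psi)^{(\beta,\alpha)}$ need not even be a cocycle. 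The remaining manipulations are straightforward substitutions using that $\widehat{A}$ is a Com-PreLie algebra and $V$ is a representation of it.
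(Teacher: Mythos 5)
Your proposal is correct and follows essentially the same route as the paper: both define $\lambda=s-s'$ (the paper's $\varphi$), show $(\phi,\psi)-(\phi',\psi')=\delta^1(\lambda)$ using that $V$ is trivial, show $(\phi,\psi)^{(\beta,\alpha)}-(\phi',\psi')^{(\beta,\alpha)}=\delta^1(\beta\lambda\alpha^{-1})$ using the compatibility conditions for $(\beta,\alpha)\in\mathcal{C}$, and conclude that the two Wells classes differ by the coboundary of $\beta\lambda\alpha^{-1}-\lambda$. Your explicit remark that membership in $\mathcal{C}$ is where the argument genuinely needs the hypothesis is a nice clarification, but the substance matches the paper's proof.
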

\begin{proof}

 Let $s$ and $s'$ be two
sections of $j$. The section  $s$  induces the abelian 2-cocycle $(\phi,\psi)$. Likewise, the abelian 2-cocycle  $(\phi',\psi')$
be the corresponding  abelian 2-cocycle induced by the section $s'$. Define linear maps $\varphi:
A\longrightarrow V$ by $\varphi(x)=s(x)-s'(x)$. Since
$j \varphi(x)=j s(x)-j s'(x)=0$,
 $\varphi$ is well defined. Then we have the induced abelian 2-cocycles $(\phi,\psi)$ and $(\phi',\psi')$ are equivalent, and equivalence is given by the map $\varphi$. Since for any $x, y\in A$, we have
 \begin{eqnarray*}
  &&\phi (x, y) - \phi' (x, y) \\
  &=&  s(x) \ast_{\widehat{A}} s(y) - s (x \ast y) - s'(x) \ast_{\widehat{A}} s'(y) + s' (x \ast y) \\
  &=&\mu(x) \varphi (y) - \varphi (x \ast y )+ \mu(y) \varphi (x)- \varphi(x)\ast_{\widehat{A}} \varphi(y)\\
  &=&\mu(x) \varphi (y) - \varphi (x \ast y )+ \mu(y) \varphi (x),
  \end{eqnarray*}
and similarly, $ \psi (x, y) - \psi' (x, y) =l(x) \varphi (y) - \varphi (x\bullet y ) + r(y) \varphi(x).$

On the other hand, the abelian 2-cocycles $(\phi,\psi)^{(\beta,\alpha)}$ and $(\phi',\psi')^{(\beta,\alpha)}$ are equivalent, and equivalence is given by $\beta \varphi \alpha^{-1}$. Since $(\phi,\psi)$ and $(\phi',\psi')$ are equivalent, for any $x, y\in A$, we have
 \begin{eqnarray*}
  &&\phi_{(\beta,\alpha)} (x, y) - \phi'_{(\beta,\alpha)} (x, y) \\
  &=& \beta\big(\mu(\alpha^{-1}(x)\varphi(\alpha^{-1}(y))-\varphi(\alpha^{-1}(x)\ast \alpha^{-1}(y))+\mu(\alpha^{-1}(y))\varphi(\alpha^{-1}(x))\big) \\
  &=&\mu(x)\beta\varphi \alpha^{-1}(y)-\beta\varphi \alpha^{-1}(x\ast y)+\mu(y)\beta\varphi \alpha^{-1}(x),(\because ~ (\beta,\alpha)\in\mathcal{C})
  \end{eqnarray*}
and similarly,
$ \psi_{(\beta, \alpha)}(x,y)-\psi'_{(\beta, \alpha)}(x,y)=l(x) \beta\varphi \alpha^{-1} (y) - \beta\varphi \alpha^{-1} (x\bullet y ) + r(y) \beta\varphi \alpha^{-1}(x).$

Combining the results of the last two paragraphs, we have that the abelian 2-cocycles $$(\phi,\psi)^{(\beta,\alpha)}- (\phi,\psi)$$
and $$(\phi',\psi')^{(\beta,\alpha)}-(\phi',\psi')$$
are equivalent, and equivalence is given by $(\beta\varphi \alpha^{-1}- \varphi)$. Hence they corresponds to the same element in $\mathcal{H}^2_\mathrm{MPL} (A , V )$. This completes the proof.
\end{proof}
The above claim provides an alternative phrasing of the Theorem \ref{Idu} and Theorem \ref{CIdu}: a pair
 $(\beta,\alpha)\in \mathrm{Aut}(A
)\times \mathrm{Aut}(V )$ is inducible if and only if $(\beta,\alpha)\in \mathcal{C}$ and $\mathcal {W}(\beta,\alpha)=0$.

Let
\begin{align*}
\mathrm{Aut}_{V}^{A}
(\widehat{A})=\{\gamma \in \mathrm{Aut}(\widehat{A})| \tau(\gamma)=(\mathrm{id} _{A},\mathrm{id}_{V})\}.
 \end{align*} Recall that
\begin{align*}
		 \mathcal{Z}^1 (A,V)=&\left\{\varphi:A\rightarrow V \left|\begin{aligned}& D^1(\varphi)=0
     \end{aligned}\right.\right\}.
     \end{align*}
It is easy to check that $ \mathcal{Z}^1(A,V)$ is an abelian group, which is called an abelian 1-cocycle on $A $ with values in $V $.
 \begin{pro}\label{Der}
Let (\ref{ab-ext-1})
be an abelian extension of $A$ by $V $. Then $ \mathcal{Z}^1 (A ,V) \cong \mathrm{Aut}_{V}^{A}(\widehat{A})$ as groups.
\end{pro}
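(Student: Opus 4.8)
The plan is to construct an explicit isomorphism between $\mathcal{Z}^1(A,V)$ and $\mathrm{Aut}_{V}^{A}(\widehat{A})$ by mimicking the bijection $\gamma \leftrightarrow \varphi$ established in the proof of Proposition \ref{Idu} in the special case $(\beta,\alpha)=(\mathrm{id}_V,\mathrm{id}_A)$. First I would observe that when $\beta=\mathrm{id}_V$ and $\alpha=\mathrm{id}_A$, the compatibility conditions (\ref{IamA1-1}) hold automatically, so $(\mathrm{id}_V,\mathrm{id}_A)\in\mathcal{C}$; moreover Eqs.~(\ref{IamA1-2})-(\ref{IamA1-3}) for a linear map $\varphi:A\to V$ become exactly
\begin{eqnarray*}
0 &=& \mu(x)\varphi(y)-\varphi(x\ast y)+\mu(y)\varphi(x),\\
0 &=& l(x)\varphi(y)-\varphi(x\bullet y)+r(y)\varphi(x),
\end{eqnarray*}
which are precisely the defining equations (\ref{A1co-1})-(\ref{A1co-2}) of a $1$-cocycle, i.e. $\varphi\in\mathcal{Z}^1(A,V)$. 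So the content of Proposition \ref{Idu} specializes to: the identity pair is always inducible, and the maps $\gamma$ inducing it are governed by $1$-cocycles.

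Next I would define the two maps. Given $\varphi\in\mathcal{Z}^1(A,V)$, set $\gamma_\varphi:\widehat{A}\to\widehat{A}$ by $\gamma_\varphi(s(x)+u)=s(x)+u+\varphi(x)$, following (\ref{Aut1}) with $\alpha=\mathrm{id}$, $\beta=\mathrm{id}$. The argument in the proof of Proposition \ref{Idu} already shows this is a bijective Com-PreLie algebra homomorphism (the verification of multiplicativity used only that $\varphi$ satisfies (\ref{IamA1-2})-(\ref{IamA1-3}) plus the compatibility with the representation, which here is automatic), and that $\gamma_\varphi|_V=\mathrm{id}_V$, $\overline{\gamma_\varphi}=\mathrm{id}_A$, so $\gamma_\varphi\in\mathrm{Aut}_V^A(\widehat{A})$. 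Conversely, given $\gamma\in\mathrm{Aut}_V^A(\widehat{A})$, the element $(\gamma s-s)(x)$ lies in $\ker(j)=V$ because $\overline{\gamma}=j\gamma s=\mathrm{id}_A$, so $\varphi_\gamma:=\gamma s-s$ is a linear map $A\to V$; applying Proposition \ref{Idu} (the forward direction) with $(\beta,\alpha)=(\mathrm{id}_V,\mathrm{id}_A)$ shows $\varphi_\gamma$ satisfies the two displayed cocycle equations, hence $\varphi_\gamma\in\mathcal{Z}^1(A,V)$. One checks $\varphi\mapsto\gamma_\varphi$ and $\gamma\mapsto\varphi_\gamma$ are mutually inverse: $\varphi_{\gamma_\varphi}(x)=\gamma_\varphi(s(x))-s(x)=\varphi(x)$, and for the other direction any $\gamma$ acts on $s(x)+u$ by $\gamma(s(x))+\gamma(u)=s(x)+\varphi_\gamma(x)+u$ since $\gamma|_V=\mathrm{id}$, so $\gamma=\gamma_{\varphi_\gamma}$.

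Finally I would check the map is a group homomorphism. The group operation on $\mathrm{Aut}_V^A(\widehat{A})$ is composition, and on $\mathcal{Z}^1(A,V)$ it is pointwise addition (noted in the text just before the Proposition). Computing $(\gamma_\varphi\circ\gamma_{\varphi'})(s(x)+u)=\gamma_\varphi(s(x)+u+\varphi'(x))=s(x)+u+\varphi'(x)+\varphi(x)=\gamma_{\varphi+\varphi'}(s(x)+u)$ gives $\gamma_\varphi\circ\gamma_{\varphi'}=\gamma_{\varphi+\varphi'}$, so $\varphi\mapsto\gamma_\varphi$ is an isomorphism of groups. The only mild subtlety — the step I expect to be the main point to get right — is confirming that the conditions (\ref{IamA1-1}) are vacuous and that (\ref{IamA1-2})-(\ref{IamA1-3}) collapse exactly onto the $1$-cocycle equations $D^1(\varphi)=0$; everything else is a direct transcription of Proposition \ref{Idu}'s proof with the identity pair substituted in, together with the easy bookkeeping that composition corresponds to addition.
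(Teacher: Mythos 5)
Your proposal is correct, and the underlying bijection is exactly the one the paper uses: $\gamma\mapsto\varphi_\gamma=\gamma s-s$ with inverse $\varphi\mapsto\gamma_\varphi$, $\gamma_\varphi(s(x)+u)=s(x)+u+\varphi(x)$, together with the same additivity check $\gamma_\varphi\circ\gamma_{\varphi'}=\gamma_{\varphi+\varphi'}$. Where you differ is in how the two nontrivial verifications are carried out. The paper proves directly, by hand, that $\varphi_\gamma$ satisfies the $1$-cocycle identities (\ref{A1co-1})--(\ref{A1co-2}) (expanding $\gamma\psi(x,y)-\psi(x,y)$ and using $\gamma|_V=\mathrm{id}_V$) and, for surjectivity, verifies directly that $\gamma_\varphi$ is a Com-PreLie automorphism of $\widehat{A}$. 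You instead specialize Proposition \ref{Idu} to the pair $(\beta,\alpha)=(\mathrm{id}_V,\mathrm{id}_A)$, observing that the compatibility conditions (\ref{IamA1-1}) become vacuous and that (\ref{IamA1-2})--(\ref{IamA1-3}) collapse to the $1$-cocycle equations, so both the cocycle property of $\varphi_\gamma$ (forward direction of \ref{Idu}) and the automorphism property of $\gamma_\varphi$ (converse direction, with (\ref{Aut1}) becoming your formula) come for free. This is a legitimate and economical shortcut: it avoids repeating computations already done, at the small cost of making Proposition \ref{Der} logically dependent on Proposition \ref{Idu}, whereas the paper's version is self-contained (and, as written, also records injectivity of $\Psi$ explicitly, which in your formulation is subsumed in the mutual-inverse check). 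No gap; just be sure to state, as you implicitly do, that every element of $\widehat{A}$ is uniquely $s(x)+u$ with $x\in A$, $u\in V$, so that $\gamma_\varphi$ is well defined and the identity $\gamma=\gamma_{\varphi_\gamma}$ makes sense.
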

\begin{proof}
Define $\Psi:\mathrm{Aut}_{V}^{A}(\widehat{A}) \longrightarrow \mathcal{Z}^1(A ,V)$  by
  $\Psi(\gamma)$, where $\Psi(\gamma) \triangleq \varphi_{\gamma}:A \to V$  is given by
  $$\Psi(\gamma)(x)=\varphi_{\gamma}(x)=\gamma s(x)-s(x), $$
  for all $\gamma\in \mathrm{Aut}_{V}^{A }(\widehat{A}),~x \in A.$
Firstly, we prove that  $\Psi$ is well-defined.
\begin{eqnarray*}
 &&l(x)\varphi_{\gamma}(y)-\varphi_{\gamma}(x \bullet y)+r(y)\varphi_{\gamma}(x)\\
&=&s(x) \bullet \varphi_{\gamma}(y)-\varphi_{\gamma}(x \bullet y)+\varphi_{\gamma}(x) \bullet s(y)\\
&=&-\gamma s(x)\bullet_{\widehat{A}} \gamma s(y)+\gamma s(x) \bullet_{\widehat{A}} s(y)+ s(x) \bullet_{\widehat{A}} \gamma s(y)-s(x) \bullet_{\widehat{A}} s(y)\\
&&+\gamma\big( s(x) \bullet_{\widehat{A}} s(y)\big) - \gamma s (x \bullet y)-s(x) \bullet_{\widehat{A}} s(y) + s (x \bullet y)\\
&=&-\varphi_{\gamma}(x) \bullet \varphi_{\gamma}(y)+\gamma \psi(x,y)-\psi(x,y)(\because ~ \gamma \big|_{V} =\mathrm{id}_{V} ) \\
&=&0,
\end{eqnarray*}

This confirms that Eq.~(\ref{A1co-2}) is applicable.
By the same token, we can prove that Eq.~(\ref{A1co-1}) holds.
Thus, $\varPsi$ is well-defined.

Secondly, for any $\gamma,\gamma'\in \mathrm{Aut}_{V}^{A }(\widehat{A})$ and $x\in A$, suppose $\Psi(\gamma)=\varphi_{\gamma}$
and $\Psi(\gamma')=\varphi_{\gamma'}.$
We get
\begin{align*}\Psi(\gamma \gamma')(x)&=\gamma \gamma's(x)-s(x)
\\&=\gamma(\varphi_{\gamma'}(x)+s(x))-s(x)
\\&=\gamma \varphi_{\gamma'}(x)+\gamma s(x)-s(x)
\\&=\varphi_{\gamma}(x)+\varphi_{\gamma'}(x) \quad (\because ~ \gamma |_V = \mathrm{id}_V)
\\&=\Psi(\gamma)(x)+\Psi(\gamma')(x).\end{align*}
Thus, $\Psi(\gamma \gamma')=\Psi(\gamma)+\Psi(\gamma')$, that is, $\Psi$ is a homomorphism of groups.

Finally, we prove that $\Psi$ is bijective.  For all $\gamma \in \mathrm{Aut}_{V}^{A}(\widehat{A})$, if $\Psi(\gamma)=\varphi_{\gamma}=0$, we can get $\varphi_{\gamma}(x)=\gamma s(x)-s(x)=0$, Therefore, for any $\widehat{x}=s(x) +u \in \widehat{A}$, we have
$$ \gamma(\widehat{x}) = \gamma( s(x)+u)=s(x)+\gamma s(x)-s(x)+\gamma(u)=s(x)+u=\widehat{x},$$
that is, $\gamma=\mathrm{id}_{\hat{A}}$.
Thus, $\Psi$ is injective. To show that the map $\Psi$ is surjective,
 for any $\varphi \in \mathcal{Z}^1(A ,V)$, define linear maps $\gamma:\widehat{A} \rightarrow \widehat{A} $ by
  \begin{equation}\label{W7}\gamma(\widehat{x})=\gamma(s(x)+u)=s(x)+\varphi(x)+u,~\forall~\widehat{x}\in \widehat{A}.\end{equation}
We need to verify that $\gamma $ is an automorphism of the Com-PreLie algebra $\widehat{A}$.
It is obviously that $\gamma$ are bijective. Let $\widehat{x}, \widehat{y} \in \widehat{A}$ , it is not hard to see that $ \gamma( \widehat{x} \ast_{\widehat{A}} \widehat{y}) =\gamma  (\widehat{x}) \ast_{\widehat{A}} \gamma (\widehat{y}) $,$ \gamma(\widehat{x} \bullet_{\widehat{A}} \widehat{y}) =\gamma(\widehat{x})\bullet_{\widehat{A}} \gamma(\widehat{y})$.
Thus, $\gamma$ is an isomorphism of the Com-PreLie algebra, i.e., $\gamma\in  \mathrm{Aut}(\widehat{A})$.
 Moreover, we have $(j\gamma s,\gamma|_{V})=(\mathrm{id}_A,\mathrm{id}_V)$. It follows that $\gamma \in \mathrm{Aut}_{V}^{A}(\widehat{A})$. Thus, $\Psi$ is surjective. In all, $\Psi$ is bijective.
 So, $\mathcal{Z}^1(A ,V) \cong \mathrm{Aut}_{V}^{A}(\widehat{A})$.
\end{proof}
\begin{thm} Let
 (\ref{ab-ext-1})
be an abelian extension of $A$ by $V$. There is an exact sequence:
$$0\longrightarrow \mathcal{Z}^1(A,V)\stackrel{\iota}{\longrightarrow} \mathrm{Aut}_{V}(\widehat{A})
\stackrel{\tau}{\longrightarrow}\mathcal{C} \stackrel{\mathcal {W}}{\longrightarrow} \mathcal{H}^2(A,V).$$
\end{thm}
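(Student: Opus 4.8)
The plan is to prove exactness of the sequence at each of the three interior spots, in each case reducing the claim to a structural result already established. First I would fix the maps. The homomorphism $\iota$ is the composite of the group isomorphism $\mathcal{Z}^1(A,V) \xrightarrow{\sim} \mathrm{Aut}_{V}^{A}(\widehat{A})$ of Proposition~\ref{Der} with the inclusion $\mathrm{Aut}_{V}^{A}(\widehat{A}) \hookrightarrow \mathrm{Aut}_{V}(\widehat{A})$; the homomorphism $\tau$ is the one constructed before Proposition~\ref{Idu}; and $\mathcal{W}$ is the Wells map of (\ref{W1}). Two preliminary remarks are needed: (i) the image of $\tau$ genuinely lies in $\mathcal{C}$, because for $\gamma \in \mathrm{Aut}_{V}(\widehat{A})$ the pair $(\gamma|_{V}, \overline{\gamma})$ is inducible by definition, hence satisfies the compatibility relations (\ref{IamA1-1}) by the first computation in the proof of Proposition~\ref{Idu}, and (\ref{IamA1-1}) is exactly the condition defining $\mathcal{C}$; (ii) $\mathcal{W}$ is only a map of pointed sets, so exactness at $\mathcal{C}$ is to be read as $\mathrm{Im}(\tau) = \mathcal{W}^{-1}(0)$.

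Exactness at $\mathcal{Z}^1(A,V)$ is the assertion that $\iota$ is injective, which is immediate since it is an isomorphism followed by an inclusion. Exactness at $\mathrm{Aut}_{V}(\widehat{A})$ is the assertion $\mathrm{Im}(\iota) = \mathrm{Ker}(\tau)$. By the very definition of $\mathrm{Aut}_{V}^{A}(\widehat{A})$ one has $\mathrm{Ker}(\tau) = \{\gamma \in \mathrm{Aut}_{V}(\widehat{A}) : \tau(\gamma) = (\mathrm{id}_{V}, \mathrm{id}_{A})\} = \mathrm{Aut}_{V}^{A}(\widehat{A})$, and Proposition~\ref{Der} identifies this subgroup with $\mathcal{Z}^1(A,V)$; since $\iota$ was defined precisely as this identification composed with the inclusion into $\mathrm{Aut}_{V}(\widehat{A})$, we get $\mathrm{Im}(\iota) = \mathrm{Aut}_{V}^{A}(\widehat{A}) = \mathrm{Ker}(\tau)$.

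The substantive step is exactness at $\mathcal{C}$, i.e.\ $\mathrm{Im}(\tau) = \mathcal{W}^{-1}(0)$. For this I would invoke the reformulation recorded just after the proof that $\mathcal{W}$ is section-independent: a pair $(\beta, \alpha)$ is inducible if and only if $(\beta, \alpha) \in \mathcal{C}$ and $\mathcal{W}(\beta, \alpha) = 0$; and by definition ``$(\beta,\alpha)$ inducible'' means ``$(\beta,\alpha) \in \mathrm{Im}(\tau)$''. Combined with $\mathrm{Im}(\tau) \subseteq \mathcal{C}$ from remark (i), this yields $\mathrm{Im}(\tau) = \{(\beta,\alpha) \in \mathcal{C} : \mathcal{W}(\beta,\alpha) = 0\} = \mathcal{W}^{-1}(0)$. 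If a self-contained argument is preferred over citing the reformulation: for $\mathrm{Im}(\tau) \subseteq \mathcal{W}^{-1}(0)$ one takes $\gamma \in \mathrm{Aut}_{V}(\widehat{A})$, sets $\varphi := \gamma s - s\,\overline{\gamma}$, and checks as in Proposition~\ref{Idu} that $(\phi,\psi)^{\tau(\gamma)} - (\phi,\psi)$ is a $2$-coboundary with cobounding map $\varphi\,\overline{\gamma}^{-1}$, so $\mathcal{W}(\tau(\gamma)) = 0$; conversely, given $(\beta,\alpha) \in \mathcal{C}$ with $(\phi,\psi)^{(\beta,\alpha)} - (\phi,\psi)$ a coboundary, Theorem~\ref{CIdu} produces $\gamma \in \mathrm{Aut}_{V}(\widehat{A})$ with $\tau(\gamma) = (\beta,\alpha)$.

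The main obstacle is essentially bookkeeping rather than new mathematics: one must be careful that $\tau$ is viewed as a map into $\mathcal{C}$ (remark (i)) and that ``exact sequence'' is interpreted in the right categories — group homomorphisms through $\mathcal{C}$, and a map of pointed sets at the $\mathcal{H}^2$ end — after which every exactness statement collapses onto an already-proved result: Proposition~\ref{Der} at the left two spots and Theorem~\ref{CIdu} together with Proposition~\ref{Idu} at the spot $\mathcal{C}$.
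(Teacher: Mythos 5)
Your proof is correct and follows essentially the same route as the paper: injectivity of $\iota$ at the first spot, the identification $\mathrm{Ker}(\tau)=\mathrm{Aut}_{V}^{A}(\widehat{A})\cong \mathcal{Z}^1(A,V)$ from Proposition \ref{Der} at the second, and Theorem \ref{CIdu} (equivalently Proposition \ref{Idu}) at the third. Your extra remarks — that $\mathrm{Im}(\tau)\subseteq\mathcal{C}$ and that exactness at $\mathcal{C}$ means $\mathrm{Im}(\tau)=\mathcal{W}^{-1}(0)$ since $\mathcal{W}$ is only a map of pointed sets — are careful bookkeeping the paper leaves implicit, but they do not change the argument.
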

\begin{proof}Since the inclusion map $\iota : \mathcal{Z}^1 (A ,V) \to \mathrm{Aut}_{V}(\widehat{A})$ is an injection, the above sequence is exact at the first term.

Next, since $\mathrm{ker} (\tau)=\mathrm{Aut}_{V}^{A}(\widehat{A})$.
By Theorem \ref{Der}, we have $\mathrm{Aut}_{V}^{A}(\widehat{A})\cong \mathcal{Z}^1(A ,V).$
Thus, we have $\mathrm{ker} (\tau) =\mathrm{im}(\iota)$. This shows that the sequence is exact at the second term.

Finally, to show that the sequence is exact at the third term, take a pair $(\beta,\alpha) \in \mathrm{ker}(\mathcal{W})$, that is $\mathcal {W}(\beta,\alpha)=0$. Thus, we have the abelian 2-cocycles $(\phi,\psi)^{(\beta,\alpha)}$ and $(\phi,\psi)$ are equivalent, by Theorem \ref{CIdu}, the pair $(\beta,\alpha)$ is inducible. In other words, there exists an automorphism $\gamma \in \mathrm{Aut}_{V}(\widehat{A})$ such that $\tau(\gamma) = (\beta,\alpha)$. This shows that $(\beta,\alpha) \in \mathrm{im}(\tau)$. Conversely, if a pair $(\beta,\alpha) \in \mathrm{im}(\tau)$, then by definition the pair $(\beta,\alpha)$ is inducible. Hence again by Theorem \ref{CIdu}, the abelian 2-cocycles $(\phi,\psi)^{(\beta,\alpha)}$ and $(\phi,\psi)$ are equivalent. Therefore, $\mathcal{W}(\beta,\alpha) = 0$ which implies that $(\beta,\alpha) \in \mathrm{ker}(\mathcal{W})$. Thus, we obtain $\mathrm{ker}(\mathcal{W}) = \mathrm{im}(\tau)$. Hence the result follows.
\end{proof}

\vskip7pt
\footnotesize{
\noindent Tao Zhang\\
School of Mathematics and Statistics,\\
Henan Normal University, Xinxiang 453007, P. R. China;\\
 E-mail address: \texttt{{zhangtao@htu.edu.cn}}

\vskip7pt
\footnotesize{
\noindent Ying-Hua Lu\\
School of Mathematics and Statistics,\\
Henan Normal University, Xinxiang 453007, P. R. China;\\
 E-mail address: \texttt{{lu-yinghua@qq.com}}

 \end{document}